 \newtheorem{thm}{Theorem}[section]
 \newtheorem{cor}[thm]{Corollary}
 \newtheorem{lem}[thm]{Lemma}
 \theoremstyle{definition}
 \theoremstyle{remark}
 \newtheorem{rem}[thm]{Remark}
 \numberwithin{equation}{section}
\begin{document}
%-------------------------------------------------------------------------
% editorial commands: to be inserted by the editorial office
%
%\firstpage{1}
%\volume{228}
%\Copyrightyear{2004}
%\DOI{003-0001}
%
%
%\seriesextra{Just an add-on}
%\seriesextraline{This is the Concrete Title of this Book\br H.E. R and S.T.C. W, Eds.}
%
% for journals:
%
%\firstpage{1}
%\issuenumber{1}
%\Volumeandyear{1 (2004)}
%\Copyrightyear{2004}
%\DOI{003-xxxx-y}
%\Signet
%\commby{inhouse}
%\submitted{March 14, 2003}
%\received{March 16, 2000}
%\revised{April 10, 2009}
%\accepted{July 22, 2000}
%
%
%
%---------------------------------------------------------------------------
%Insert here the title, affiliations and abstract:
%
\title[On Ramanujan's lost notebook]
 {On a pair of identities from Ramanujan's lost notebook}
%----------Author 1
\author[McLaughlin]{James McLaughlin}

\address{%
Department of Mathematics\\
West Chester University\\
West Chester, PA 19383\\
USA}

\email{jmclaughlin@wcupa.edu}

%\thanks{This work was completed with the support of our
%\TeX-pert.}
%----------Author 2
\author[Sills]{Andrew V. Sills}
\address{Department of Mathematical Sciences\br
Georgia Southern University\br
Statesboro, GA 30460-8093\br
USA}
\email{ASills@GeorgiaSouthern.edu}
%----------classification, keywords, date
\subjclass{Primary 11B65; Secondary 05A10, 11P81, 05A17}
%2010 classification:  Primary 11P84; Secondary 11B65, 05A19

\keywords{$q$-series, Rogers-Ramanujan identities, integer partitions}

\date{\today}

\begin{abstract}
Using a pair of two variable series-product identities recorded by
Ramanujan in the lost notebook as inspiration, we find some new
identities of similar type.  Each identity immediately implies an
infinite family of Rogers-Ramanujan type identities, some of which
are well-known identities from the literature.

We also use these identities to derive some general identities for integer partitions.
\end{abstract}

\maketitle

\section{Introduction}
Ramanujan recorded the following identity at the top of a page
of his lost notebook~\cite[p. 33]{R88} (cf.~\cite[p. 99, Entry 5.3.1]{AB09}):
\begin{equation} \label{R1}
  \sum_{n=0}^\infty \frac{x^{2n^2} (-ax;x^2)_n (-x/a;x^2)_n}{(x^2;x^2)_{2n}}
 = \frac{f(ax^3, x^3/a)}{f(-x^2)},
\end{equation}
where we employ the standard notations for rising $q$-factorials,
\[ (A;q)_\infty := (1-A)(1-Aq)(1-Aq^2)\cdots
 \mbox{ and }(A;q)_n:=  \frac{(A;q)_\infty}{(Aq^n;q)_\infty},\]
\[  (A_1, A_2, \dots, A_r; q)_n:= (A_1; q)_n (A_2; q)_n \cdots (A_r;q)_n, \]
and Ramanujan's theta function~\cite[p. 17, Eq. (1.4.8)]{AB09} is given by
\begin{equation} \label{RamThetaDef}
f(a,b) :=  \sum_{j=-\infty}^\infty a^{j(j+1)/2} b^{j(j-1)/2}
= (-a, -b, ab ;ab)_\infty
\end{equation}
with Ramanujan's abbreviation~\cite[p. 17, Eq. (1.4.11)]{AB09}
\begin{equation*} \label{RamFDef}
f(-q) := f(-q,-q^2) = (q;q)_\infty.
\end{equation*}

A bit further down the same page, Ramanujan recorded
\cite[p. 103, Entry 5.3.5]{AB09}
\begin{equation}\label{R2}
 \sum_{n=0}^\infty \frac{x^{n^2} (-ax;x^2)_n (-x/a;x^2)_n }{(x;x^2)_n (x^4;x^4)_n} = \frac{f(ax^2, x^2/a)}{\psi(-x)},
\end{equation}
where
\[ \psi(q):= f(q,q^3) = \frac{(q^2;q^2)_\infty}{(q;q^2)_\infty}\]
is another notation frequently used by Ramanujan~\cite[p. 17, Eq. (1.4.10)]{AB09}.

 From an analytic viewpoint,~\eqref{R1} and~\eqref{R2} are valid for $|x|<1$ and
$a\neq 0$.

 These two identities are noteworthy for several reasons.
Firstly, they are summable two variable Rogers-Ramanujan type identities.
  In contrast, in the standard two
variable generalization of the first Rogers-Ramanujan identity,
\begin{equation*}
\sum_{n=0}^\infty \frac{z^n q^{n^2}}{(q;q)_n}
= \frac{1}{(zq;q)_\infty} \sum_{n=0}^\infty \frac{(-1)^n z^{2n}
q^{n(5n-1)/2} (1-zq^{2n}) (z;q)_n}{ (1-z)(q;q)_n},
\end{equation*}
the right hand side reduces to an infinite product only for special values of $z$, e.g.
$z=1$ gives the first Rogers-Ramanujan identity~\cite[p. 328 (2)]{R94},
\begin{equation*}
  \sum_{n=0}^\infty \frac{q^{n^2}}{(q;q)_n} = \frac{1}{(q;q^5)_\infty (q^4;q^5)_\infty},
\end{equation*} while $z=q$ gives the second Rogers-Ramanujan identity~\cite[p. 330 (2)]{R94},
\begin{equation*}
  \sum_{n=0}^\infty \frac{q^{n(n+1)}}{(q;q)_n} = \frac{1}{(q^2;q^5)_\infty (q^3;q^5)_\infty}.
\end{equation*}

  Secondly, both identities contain an infinite number of
Rogers-Ramanujan type
identities as special cases, a number of which appear in the literature,
as summarized in Tables 1 and 2.

\begin{table}[ht] \caption{Special cases of ~\eqref{R1}}
\begin{tabular}{|c|c| c|}
\hline
 $a$ & $x$ & References \\
 \hline\hline
 $ i $ & $\sqrt{q}$ & Ramanujan~\cite[Entry 4.2.10]{AB09};
 %\\  & &
 Slater~\cite[p. 156, Eq. (48)]{S52} \\ \hline
  $-1$ & $q$ & Ramanujan~\cite[p. 102, Entry 5.3.3]{AB09} \\ \hline
  $ e^{2\pi i /3}$ & $q$ & Ramanujan~\cite[p. 103, Entry 5.3.4]{AB09} \\ \hline
  $q$ & $q$ & Stanton~\cite[p. 61]{S01}\\ \hline
  $ -q^{1/2} $ & $q^{3/2}$ & Bailey~\cite[p. 422, Eq. (1.6)]{B47}, Slater~\cite[p. 156, Eq. (42)]{S52}\\ \hline
  $ -q $ & $ q^2 $& Slater~\cite[p. 157,  Eq. (53)]{S52} \\ \hline
\end{tabular}
\end{table}

\begin{table}[ht] \caption{Special cases of ~\eqref{R2}}
\begin{tabular}{|c|c| c|}
\hline
 $a$ & $x$ & References \\
 \hline\hline
  $ -1 $  & $q$  & Ramanujan~\cite[p. 104, Entry 5.3.6]{AB09}, Slater~\cite[p. 152, Eq. (4)]{S52} \\ \hline
  $ e^{2\pi i/3}$ & $q$ & Ramanujan~\cite[p. 105, Entry 5.3.8]{AB09} \\ \hline
  $ e^{\pi i /3}$ & $q$ & Ramanujan~\cite[p. 106, Entry 5.3.9]{AB09} \\ \hline
  $q^{1/2}$ & $q^2$ & Gessel-Stanton~\cite[p. 197, Eq. (7.24)]{GS83}\\ \hline
  $ -q $ & $q^{3}$ & Dyson~\cite[p. 9, Eq. (7.5)]{B49}\\ \hline
\end{tabular}
\end{table}

\pagebreak

In \cite{McLSZ09b}  a partner to Ramanujan's \eqref{R1} (identity \eqref{R1partner} below) was found. This motivated us to take another look at \eqref{R1} and \eqref{R2} in the light of this new partner. The results of this reexamination include a new proof of \eqref{R1partner}, a partner to \eqref{R2}, another similar general identity, and two families of false theta series identities.

\begin{align}\label{R1partner}
  \sum_{n=0}^\infty \frac{x^{2n(n+1)} (-a;x^2)_{n+1} (-x^2/a;x^2)_{n} }
  { (x^2;x^2)_{2n+1} } & = \frac{ f(a, x^6/a)}{f(-x^2)},\\
  \label{RR22p}
(1+a)\sum_{n=0}^{\infty}\frac{(-ax,-x/a;x^2)_nx^{n^2+2n}}{(x;x^2)_{n+1}(x^4;x^4)_n}
&=   \frac{f(a, x^4/a)}{\psi(-x)} , \\
%\frac{(-a,-x^4/a,x^4;x^4)_{\infty}}{\psi(-x)},\\
\label{R1partnerSS}
  \sum_{n=0}^\infty \frac{ x^{n(n+1)/2} (-x;x)_n (-a;x)_{n+1} (-x/a;x)_{n}  }
  {(x;x)_{2n+1}} &= \frac{f(a, x^2/a)}{ \varphi(-x) },\\
  \label{R1partnerFT}
  \sum_{n=0}^\infty \frac{(-1)^n x^{n(n+1)/2}  (-a;x)_{n+1} (-x/a;x)_{n}  }
  {(x^{n+1};x)_{n+1}} &= \sum_{n=0}^\infty (-1)^n x^{n^2+n} \left( a^{-n} + a^{n+1} \right),\\
  \label{rr22cp}
1+(a-1)\sum_{n=1}^{\infty} \frac{(-a x;x)_{n-1}(-1/a,x;x)_n}{(x;x)_{2n}}&x^{n(n+1)/2}(-1)^n\\
&=\sum_{n=0}^{\infty}x^{n^2}(-1)^n(a^n+x^{2n+1}a^{-n-1}). \notag
\end{align}
%\end{equation}
where
\[ \varphi(-q) := f(-q,-q) = \frac{(q;q)_\infty}{(-q;q)_\infty} \]
is yet another notation used by Ramanujan~\cite[p. 17, Eq. (1.4.9)]{AB09}.

%After finding and proving identities \eqref{RR22p} and \eqref{R1partnerSS}, we noticed that %each is a special case of a more general identity (as is Ramanujan's \eqref{R2}) - see the %proofs below for further explanation. However, we retain these identities for a number of %reasons.

%Firstly,
\begin{rem}
Ramanujan's identity~\eqref{R2} and its partner \eqref{RR22p}
follow from Andrews'  $q$-analog of Bailey's $_2F_1(1/ 2)$ sum~\cite[p. 526, Eq. (1.9)]{A73}:
\begin{equation}\label{aqb}
\sum_{n=0}^{\infty}\frac{(b;q)_n(q/b;q)_n c^n q^{n(n-1)/2}}{(c;q)_n(q^2;q^2)_n}
=\frac{(c q/b; q^2)_\infty (b c;q^2)_{\infty}}{(c;q)_{\infty}}.
\end{equation}

However, each of these two identities can be regarded as the first member in an infinite family of identities, and it does not appear that the more general identities can be similarly extended. For example, the second identity in the sequence whose first member is \eqref{RR22p} is the following  (for consistency with other identities, here we replace $a$ with $z$ and $x$ with $q$):
{\allowdisplaybreaks\begin{multline*}
(1 + q^4z^2)\sum_{n=0}^{\infty}\frac{\displaystyle{(-q^4;q^8)_{n+1}
 \left(-z^2q^8,-1/z^2
;q^8\right)_n} q^{4n^2+8n}}{(q^8;q^8)_{2n+1}}\\
+z(1 + q^{12}z^2)\sum_{n=0}^{\infty}\frac{\displaystyle{(-q^4;q^8)_{n+1}
 \left(-z^2q^{16},-1/q^8z^2
;q^8\right)_n} q^{4n^2+8n}}{(q^8;q^8)_{2n+1}}
\\
=(-z ,-q^4/z,q^4\,;q^4)_{\infty}
\frac{(-q^{4};q^{8})_{\infty}}{(q^{8};q^{8})_{\infty}}
\end{multline*}}
We show that each of \eqref{R1}, \eqref{R2}, \eqref{R1partner}, \eqref{RR22p}, and \eqref{R1partnerSS} may be embedded in an infinite sequence of identities, where each of the stated identities is the first member in the respective sequence of identities. See Section \ref{jtpqtpimp} for more on these identities.

Also, each of \eqref{R1}--\eqref{RR22p} gives rise to a quite general family of partition identities, which does not appear to be true for the more general identity. See Section \ref{parsec} for more details.

%Lastly, we retain our proofs since they have not been given elsewhere, and they illustrate %some of the methods used to prove such identities.
\end{rem}

\section{Proofs of the identities}
As is often the case, once the existence of
an identity of Rogers-Ramanujan type is discovered,
it is not hard to prove it using standard techniques.

Recall that $\Big( \alpha_n(z,q), \beta_n(z,q) \Big)$ is called a
\emph{Bailey pair relative to $z$} if
\[ \beta_n (z,q) = \sum_{r=0}^n \frac{\alpha_r (z,q)}{(q;q)_{n-r} (zq;q)_{n+r}}. \]
A well-established method of proof for Rogers-Ramanujan type
identities is insertion of a Bailey pair into
an appropriate limiting case of Bailey's lemma.
Since this method is well documented in the literature, we refer
the reader to, e.g., \cite[Chapter 3]{A86}
or \cite[\S 1.2, p. 3ff]{MS08},
for the details.

\begin{lem}[Andrews-Berndt] \label{ABBP}
 $(\alpha_n, \beta_n)$ form a Bailey pair relative to $q$ where
\[ \alpha_n (q,q) = (a^{-n} + a^{n+1})q^{n(n+1)/2} \] and
\[ \beta_n(q,q) = \frac{(-a;q)_{n+1} (-q/a;q)_n}{(q^2;q)_{2n}}. \]
\end{lem}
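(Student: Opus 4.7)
My plan is to verify the Bailey pair relation directly from the defining equation
\[ \beta_n(q,q) = \sum_{r=0}^{n} \frac{\alpha_r(q,q)}{(q;q)_{n-r}(q^2;q)_{n+r}}. \]
Using the elementary simplification $(q^2;q)_{2n}/\bigl[(q;q)_{n-r}(q^2;q)_{n+r}\bigr] = \qbin{2n+1}{n-r}{q}$, multiplying both sides by $(q^2;q)_{2n}$ reduces the claim to the polynomial identity
\[ (-a;q)_{n+1}\,(-q/a;q)_n = \sum_{r=0}^{n} (a^{-r} + a^{r+1})\, q^{r(r+1)/2}\, \qbin{2n+1}{n-r}{q}. \]

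Next I would collapse the two halves of the right-hand side into a single bilateral sum. Writing $j=-r$ for the $a^{-r}$ contributions and $j=r+1$ for the $a^{r+1}$ contributions, one checks that $r(r+1)/2 = \binom{j}{2}$ in both cases and that $\qbin{2n+1}{n-r}{q} = \qbin{2n+1}{n+j}{q}$ by the symmetry $\qbin{M}{k}{q}=\qbin{M}{M-k}{q}$. The right-hand side therefore coincides with
\[ \sum_{j=-n}^{n+1} a^{j}\, q^{\binom{j}{2}}\, \qbin{2n+1}{n+j}{q}, \]
and the claim is reduced to the finite form of the Jacobi triple product
\[ (-a;q)_{n+1}\,(-q/a;q)_n = \sum_{j=-n}^{n+1} a^{j}\, q^{\binom{j}{2}}\, \qbin{2n+1}{n+j}{q}. \]
This classical identity admits several short proofs, e.g.\ expand each factor on the left by the $q$-binomial theorem and collapse the resulting double sum via $q$-Chu--Vandermonde, or verify by induction on $n$ that both sides satisfy the same two-term recurrence in $n$ (with trivial initial data at $n=0$, where both sides equal $1+a$).

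The only real obstacle is purely combinatorial bookkeeping: keeping straight the asymmetric roles of $(-a;q)_{n+1}$ (which has $n+1$ factors) and $(-q/a;q)_n$ (which has $n$), and matching the exponents under the reindexing $j=-r$ versus $j=r+1$. As a sanity check, letting $n\to\infty$ in the reduced identity recovers the full Jacobi triple product $(-a,-q/a;q)_\infty = (q;q)_\infty^{-1}\sum_{j\in\mathbb{Z}} a^{j} q^{\binom{j}{2}}$, confirming that the coefficients agree.
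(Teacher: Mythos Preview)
Your argument is correct. The reduction
\[
\frac{(q^2;q)_{2n}}{(q;q)_{n-r}(q^2;q)_{n+r}}=\qbin{2n+1}{n-r}{q}
\]
is valid (both numerator and the second factor in the denominator absorb the missing $1-q$), and your reindexing $j=-r$, $j=r+1$ does collapse the sum to the finite Jacobi triple product
\[
(-a;q)_{n+1}(-q/a;q)_n=\sum_{j=-n}^{n+1} a^{j} q^{\binom{j}{2}}\qbin{2n+1}{n+j}{q},
\]
which is classical and follows, as you say, from the $q$-binomial theorem together with $q$-Chu--Vandermonde (or by an easy induction on $n$).

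As for comparison with the paper: the paper does not actually supply a proof here but simply refers to Andrews and Berndt, \emph{Ramanujan's Lost Notebook, Part II}, pp.~98--99. What appears there is in fact essentially the same calculation you carry out---the Bailey-pair relation is multiplied through by $(q^2;q)_{2n}$ and reduced to the finite form of the triple product---so your proposal is not merely correct but coincides with the argument being cited. The only cosmetic difference is that Andrews and Berndt phrase the key step as an instance of the $q$-binomial theorem applied to a single product of $2n+1$ linear factors, whereas you split into two $q$-binomial expansions and then invoke $q$-Vandermonde; these are equivalent.
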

\begin{proof}
See~\cite[pp. 98--99]{AB09}.
\end{proof}

\begin{lem} \label{NewBP}
 $(\alpha_n, \beta_n)$ form a Bailey pair relative to $1$ where
\[ \alpha_n (1,q) =
  \left\{  \begin{array}{ll}
  1, &\mbox{if $n=0$}\\
  a^{-n} q^{n(n-1)/2} +a^n q^{n(n+1)/2}, &\mbox{if $n>0$}
  \end{array} \right. \] and
\[ \beta_n(1,q) = \frac{ (-a q;q)_{n} (-1/a;q)_{n}}{ (q;q)_{2n}} . \]
\end{lem}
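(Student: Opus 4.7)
The plan is to verify the Bailey pair relation
\[
\beta_n(1,q) = \sum_{r=0}^{n}\frac{\alpha_r(1,q)}{(q;q)_{n-r}\,(q;q)_{n+r}}
\]
directly. The essential trick is that for $r>0$ the two summands of $\alpha_r$ can be packaged symmetrically: setting $\widetilde\alpha_r := a^r q^{r(r+1)/2}$ for every integer $r$, one has $\widetilde\alpha_{-r} = a^{-r}q^{r(r-1)/2}$, so $\alpha_r(1,q) = \widetilde\alpha_r + \widetilde\alpha_{-r}$ for $r>0$ and $\alpha_0(1,q) = \widetilde\alpha_0 = 1$. Using the symmetry of $(q;q)_{n-r}(q;q)_{n+r}$ under $r\mapsto -r$ and reindexing, the defining relation collapses to the bilateral form
\[
\frac{(-aq;q)_n\,(-1/a;q)_n}{(q;q)_{2n}}
 = \sum_{r=-n}^{n}\frac{a^r q^{r(r+1)/2}}{(q;q)_{n-r}\,(q;q)_{n+r}}.
\]

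To prove this, I would clear the denominator $(q;q)_{2n}$, turning the right-hand side into $\sum_{r=-n}^{n}\qbin{2n}{n-r}{q}\, a^r q^{r(r+1)/2}$, and expand the left-hand side using the finite $q$-binomial theorem twice:
\[
(-aq;q)_n=\sum_{k=0}^{n}\qbin{n}{k}{q} a^k q^{k(k+1)/2},\qquad
(-1/a;q)_n=\sum_{j=0}^{n}\qbin{n}{j}{q} a^{-j} q^{j(j-1)/2}.
\]
Multiplying, substituting $k=j+r$, and using the elementary identity
\[
\tfrac{(j+r)(j+r+1)}{2}+\tfrac{j(j-1)}{2} = \tfrac{r(r+1)}{2}+j(j+r),
\]
the coefficient of $a^r q^{r(r+1)/2}$ on the left becomes $\sum_{j}\qbin{n}{j+r}{q}\qbin{n}{j}{q} q^{j(j+r)}$. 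The lemma therefore reduces to
\[
\qbin{2n}{n-r}{q}=\sum_{j}\qbin{n}{j+r}{q}\qbin{n}{j}{q} q^{j(j+r)},
\]
which is a form of the $q$-Vandermonde-Chu summation (readily obtained from the standard version by the change of variable $m=n-r-j$).

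The argument is almost entirely mechanical $q$-binomial bookkeeping, so there is no real analytic obstacle. The one step that merits a little care is the $q$-exponent simplification just above, as it is the point at which the seemingly unrelated quadratic exponents coming from $\beta_n$ and from the symmetrised $\alpha$-sum line up; once they do, the residual identity is a classical $q$-Vandermonde and the proof is finished.
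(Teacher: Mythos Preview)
Your argument is correct: the symmetrisation via $\widetilde\alpha_r=a^r q^{r(r+1)/2}$ legitimately collapses the Bailey-pair sum to the bilateral form, the $q$-exponent identity $(j+r)(j+r+1)/2+j(j-1)/2=r(r+1)/2+j(j+r)$ checks, and the residual identity
\[
\qbin{2n}{n-r}{q}=\sum_{j}\qbin{n}{j+r}{q}\qbin{n}{j}{q}\,q^{j(j+r)}
\]
is indeed $q$-Vandermonde after the substitution $m=n-r-j$ together with the symmetry $\qbin{n}{j+r}{q}=\qbin{n}{n-r-j}{q}$.

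The paper, however, does not actually prove this lemma: its proof consists solely of the citation ``See Lemma~3 in~\cite{A92}.'' So your route is genuinely different. You give a self-contained elementary verification via the finite $q$-binomial theorem and $q$-Chu--Vandermonde, whereas the paper defers entirely to Andrews' earlier work. What your approach buys is independence from an external reference and a transparent accounting of why the quadratic exponents in $\alpha_n$ and $\beta_n$ match up; what the bare citation buys is brevity and a pointer to the Bailey-chain context in which Andrews originally situated the pair.
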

\begin{proof}
See Lemma 3 in \cite{A92}.
\end{proof}

We now prove \eqref{R1partner}, a partner to Ramanujan's identity at \eqref{R1}. We note that a different, less direct proof of this identity was given in \cite{McLSZ09b}.

\begin{thm} For $a\neq 0$ and $|x|<1$, \[
\sum_{n=0}^\infty \frac{x^{2n(n+1)} (-a;x^2)_{n+1} (-x^2/a;x^2)_{n} }
  { (x^2;x^2)_{2n+1} }  = \frac{ f(a, x^6/a)}{f(-x^2)}.
\]
\end{thm}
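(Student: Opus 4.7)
The plan is to apply the standard technique of inserting a Bailey pair into the limiting form of Bailey's lemma. Specifically, I would use the Andrews--Berndt Bailey pair from Lemma \ref{ABBP} with base $q$ replaced by $x^2$. Since this pair is relative to $q$, the substitution makes it a pair relative to $x^2$, which matches the appearance of $(-a;x^2)_{n+1}(-x^2/a;x^2)_n$ and $(x^2;x^2)_{2n+1}$ in the summand we wish to evaluate.

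After the substitution $q \mapsto x^2$, the $\beta$-part of Lemma \ref{ABBP} becomes $(-a;x^2)_{n+1}(-x^2/a;x^2)_n/(x^4;x^2)_{2n}$, which differs from the target summand only by the factor $1-x^2$, since $(x^2;x^2)_{2n+1} = (1-x^2)(x^4;x^2)_{2n}$. One then invokes the weak limiting case of Bailey's lemma obtained by letting $\rho_1,\rho_2\to\infty$, namely
\[ \sum_{n\geq 0} z^n q^{n^2}\beta_n = \frac{1}{(zq;q)_\infty}\sum_{n\geq 0} z^n q^{n^2}\alpha_n, \]
specialized to $z=q=x^2$. The weight $z^n q^{n^2} = x^{2n(n+1)}$ is precisely what appears in the target sum, so after clearing the common factor $1-x^2$ the left-hand side reproduces the sum we are trying to evaluate.

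For the right-hand side, one is left with $(x^4;x^2)_\infty^{-1}\sum_{n\geq 0}(a^{-n}+a^{n+1})x^{3n(n+1)}$. The key bookkeeping step is to convert this unilateral sum into a bilateral one: substituting $j=-n$ in the $a^{-n}$-piece gives $\sum_{j\leq 0} a^j x^{3j(j-1)}$, while substituting $j=n+1$ in the $a^{n+1}$-piece gives $\sum_{j\geq 1} a^j x^{3j(j-1)}$. The two pieces combine into the bilateral series $\sum_j a^j x^{3j(j-1)}$, which equals $f(a,x^6/a)$ by the series definition \eqref{RamThetaDef} (since $a^{j(j+1)/2}(x^6/a)^{j(j-1)/2}=a^j x^{3j(j-1)}$). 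Together with $(x^4;x^2)_\infty = (x^2;x^2)_\infty/(1-x^2) = f(-x^2)/(1-x^2)$, the factors of $1-x^2$ cancel, and the right-hand side becomes $f(a,x^6/a)/f(-x^2)$, as desired.

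The only nontrivial step is the reindexing that identifies $\sum_{n\geq 0}(a^{-n}+a^{n+1})x^{3n(n+1)}$ with $f(a,x^6/a)$; everything else is a mechanical application of the Bailey-pair machinery, consistent with the authors' prefatory remark that such identities are easy to prove once the correct form has been conjectured.
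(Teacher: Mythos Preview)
Your proposal is correct and follows essentially the same route as the paper: insert the Andrews--Berndt Bailey pair (with $q\mapsto x^2$, so it is relative to $x^2$) into the $\rho_1,\rho_2\to\infty$ limiting case of Bailey's lemma, which is precisely the content of the cited Eq.~(1.2.8) of \cite{MS08}. You have simply unpacked the paper's one-line proof, including the routine reindexing that identifies $\sum_{n\ge 0}(a^{-n}+a^{n+1})x^{3n(n+1)}$ with the bilateral theta series $f(a,x^6/a)$.
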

\begin{proof}
Insert the Bailey pair $\big( \alpha_n(x^2,x^2), \beta_n(x^2,x^2) \big)$
from Lemma~\ref{ABBP} into Eq. (1.2.8) of~\cite[p. 5]{MS08}.
\end{proof}

%\begin{thm} For $a\neq 0$ and $|x|<1$, Identity~\eqref{R1partner2} is
%valid.
%\end{thm}
%\begin{proof}
%Insert the Bailey pair $\big(\alpha_n(1,x^2), \beta_n(1,x^2)\big)$
%of Lemma~\ref{NewBP}
%into Eq. (1.2.8) of~\cite[p. 5]{MS08}.
%\end{proof}

%\begin{thm} For $a\neq 0$ and $|x|<1$, Identity~\eqref{R2partner} is
%valid.
%\end{thm}
%\begin{proof}
%Insert the Bailey pair $\big(\alpha_n(1,x^2), \beta_n(1,x^2)\big)$
%of Lemma~\ref{NewBP}
%into Eq. (1.2.9) of~\cite[p. 5]{MS08}.
%\end{proof}

%However, for the sake of completeness  we include our original proof.

\begin{thm}
For $a\not = 0$ and $|x|<1$,
\begin{equation*}
(1+a)\sum_{n=0}^{\infty}\frac{(-ax,-x/a;x^2)_nx^{n^2+2n}}{(x;x^2)_{n+1}(x^4;x^4)_n}
=\frac{(-a,-x^4/a,x^4;x^4)_{\infty}}{\psi(-x)}.
\end{equation*}
\end{thm}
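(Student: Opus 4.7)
The plan is to derive this identity as a specialization of Andrews' $q$-analog of Bailey's ${}_2F_1(1/2)$ sum~\eqref{aqb}, which the authors explicitly flagged in the remark as the source of both Ramanujan's~\eqref{R2} and its partner \eqref{RR22p}. The specific choice I would take in~\eqref{aqb} is $q=x^2$, $b=-ax$, $c=x^3$. With these substitutions $q/b=-x/a$, so that $(b;q)_n (q/b;q)_n = (-ax,-x/a;x^2)_n$; the weight becomes $c^n q^{n(n-1)/2}=x^{3n}x^{n^2-n}=x^{n^2+2n}$; and the denominator on the left reads $(x^3;x^2)_n(x^4;x^4)_n$, already matching the theorem except for the $(x;x^2)_{n+1}$ versus $(x^3;x^2)_n$ discrepancy.

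To reconcile that last discrepancy, I would multiply both sides of the specialized~\eqref{aqb} by $(1+a)/(1-x)$, then invoke the elementary factorial identity $(x;x^2)_{n+1}=(1-x)(x^3;x^2)_n$. The left-hand side becomes exactly $(1+a)\sum_{n\ge 0}\frac{(-ax,-x/a;x^2)_n x^{n^2+2n}}{(x;x^2)_{n+1}(x^4;x^4)_n}$, while the right-hand side of~\eqref{aqb} with $cq/b=-x^4/a$ and $bc=-ax^4$ becomes
\[
\frac{(1+a)(-x^4/a;x^4)_\infty(-ax^4;x^4)_\infty}{(1-x)(x^3;x^2)_\infty}.
\]
Using $(1+a)(-ax^4;x^4)_\infty=(-a;x^4)_\infty$ and $(1-x)(x^3;x^2)_\infty=(x;x^2)_\infty$ collapses this to $\frac{(-a;x^4)_\infty(-x^4/a;x^4)_\infty}{(x;x^2)_\infty}$.

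The last step is to verify that this expression equals the stated right-hand side $\frac{(-a,-x^4/a,x^4;x^4)_\infty}{\psi(-x)}$. Substituting the product formula $\psi(-x)=(x^2;x^2)_\infty/(-x;x^2)_\infty$ reduces the required equality to the single infinite-product identity
\[
\frac{1}{(x;x^2)_\infty}=\frac{(-x;x^2)_\infty(x^4;x^4)_\infty}{(x^2;x^2)_\infty},
\]
which in turn follows by combining the standard $(x;x^2)_\infty(-x;x^2)_\infty=(x^2;x^4)_\infty$ with $(x^2;x^4)_\infty(x^4;x^4)_\infty=(x^2;x^2)_\infty$. The only step that requires any insight is spotting the correct triple $(q,b,c)=(x^2,-ax,x^3)$ in~\eqref{aqb}; the remainder is purely bookkeeping, so I do not anticipate a genuine obstacle in writing out the full proof.
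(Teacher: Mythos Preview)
Your proof is correct. The substitution $(q,b,c)=(x^2,-ax,x^3)$ in~\eqref{aqb} does exactly what you claim, and the subsequent product manipulations are all valid.

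Your approach, however, is genuinely different from the paper's. The paper does \emph{not} deduce this theorem directly from~\eqref{aqb}; it instead works through Bailey pair machinery: it specializes Slater's Bailey transform~\eqref{Seq1} at $y=q\sqrt{a}$, $z\to\infty$, inserts the Bailey pair of Lemma~\ref{ABBP}, obtains an auxiliary identity~\eqref{r2eq2b} whose right side is a sum of two Jacobi triple products, and then subtracts Ramanujan's identity~\eqref{R2} to isolate the desired result. In particular, the paper's argument presupposes~\eqref{R2} as a known input, whereas your argument is self-contained. You have effectively carried out what the remark after~\eqref{RR22p} asserts is possible but the paper chose not to do; your route is shorter and more elementary. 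The paper's route, on the other hand, illustrates the Bailey chain mechanism that also yields the companion identities~\eqref{R1partner}, \eqref{R1partnerSS}, \eqref{R1partnerFT}, and~\eqref{rr22cp}, so it fits more naturally into the overall narrative of the section.
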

\begin{proof}
Recall that if $(\alpha_n(a,q), \beta_n(a,q))$ is a Bailey pair with
respect to $a$, then the case of the Bailey transform used by Slater
\cite{S51} states that
\begin{multline}\label{Seq1}
\sum_{n=0}^{\infty}(y,z;q)_n \left (\frac{aq}{yz}\right)^n
\beta_n(a,q)\\ =\frac{(aq/y,aq/z;q)_{\infty}}{(aq,aq/yz;q)_{\infty}}
\sum_{n=0}^{\infty}\frac{(y,z;q)_n}{(aq/y,aq/z;q)_n}\left
(\frac{aq}{yz}\right)^n \alpha_n(a,q).
\end{multline}
If we set $y=q\sqrt{a}$ and let $z \to \infty$ in \eqref{Seq1} (see
also (3.14) in \cite{McLSZ09b}), the following identity results:
\begin{multline}\label{Seq1aqf}
\sum_{n=0}^{\infty}(q\sqrt{a};q)_n \left (-\sqrt{a}\right)^n
q^{n(n-1)/2}\beta_n(a,q) \\
=\frac{(q\sqrt{a};q)_{\infty}}{(a q;q)_{\infty}}
\sum_{n=0}^{\infty}(1-\sqrt{a} q^n)\left
(-\sqrt{a}\right)^nq^{n(n-1)/2} \alpha_n(a,q).
\end{multline}
Next, set $a=x$, replace $q$ with $x$ and insert the Bailey pair in Lemma \eqref{ABBP} (with $q$ replaced with $x$). Replace $x$ with $x^2$  to get, after some elementary manipulations, that
\begin{multline}\label{r2eq2b}
\sum_{n=0}^{\infty}\frac{(x;x^2)_{n+1}(-a;x^2)_{n+1}(-x^2/a;x^2)_n (-1)^n x^{n^2}}{(x^2;x^2)_{2n+1}}\\
=\frac{1}{\psi(x)}\sum_{n=0}^{\infty}(1-x^{2n+1})(-1)^n x^{2n^2+n}(a^{-n}+a^{n+1})\\
=\frac{(a x,x^3/a,x^4;x^4)_{\infty}+a(x/a,x^3a,x^4;x^4)_{\infty}}{\psi(x)}
\end{multline}
The last identity follows after expanding the second sum into four sums, re-indexing the two sums containing $(-1)^{n+1}$ by replacing $n$ with $n-1$, and then applying the Jacobi Triple Product twice.

Next, replace $x$ with $-x$, $a$ with $ax$ and note that one of the resulting products on the right side of \eqref{r2eq2b} is now identical to the product side of \eqref{R2}. Subtract \eqref{R2}  from \eqref{r2eq2b} and the result follows.
\end{proof}

We next give a proof of \eqref{R1partnerSS}.
\begin{thm} For $a\neq 0$ and $|x|<1$, \[
\sum_{n=0}^\infty \frac{ x^{n(n+1)/2} (-x;x)_n (-a;x)_{n+1} (-x/a;x)_{n}  }
  {(x;x)_{2n+1}} = \frac{f(a, x^2/a)}{ \varphi(-x) }.
\]
\end{thm}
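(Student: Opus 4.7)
My plan is to mirror the strategy used above for \eqref{R1partner} by inserting the Bailey pair of Lemma~\ref{ABBP} into a suitable limiting form of the Bailey transform \eqref{Seq1}. The denominator $(x;x)_{2n+1}$ and the exponent $x^{n(n+1)/2}$ of the target identity match the shape of Lemma~\ref{ABBP} when one takes base $q=x$ (rather than $q=x^2$, which produced \eqref{R1partner}); meanwhile, the extra factor $(-x;x)_n$ in the summand suggests specializing $y=-x$ in \eqref{Seq1} and letting the companion parameter $z$ tend to infinity, so that the $(y;x)_n$ pulled down produces exactly this factor and the limit of $(z;x)_n$ supplies the quadratic power of $x$.

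Concretely, in \eqref{Seq1} I would set $a=q=x$, use the Bailey pair of Lemma~\ref{ABBP} (with $q$ replaced by $x$), choose $y=-x$, and let $z\to\infty$. Since $(z;x)_n\bigl(\tfrac{x^2}{yz}\bigr)^n\to(-1)^nx^{n(n-1)/2}(-x)^n=x^{n(n+1)/2}$, the left-hand side of \eqref{Seq1} becomes
\[
\sum_{n\ge 0}(-x;x)_n\,x^{n(n+1)/2}\,\beta_n(x,x)=(1-x)\sum_{n\ge 0}\frac{x^{n(n+1)/2}(-x;x)_n(-a;x)_{n+1}(-x/a;x)_n}{(x;x)_{2n+1}},
\]
after using $(x^2;x)_{2n}=(x;x)_{2n+1}/(1-x)$. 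On the right-hand side of \eqref{Seq1}, $(x^2/y;x)_\infty=(-x;x)_\infty$, the ratio $(y;x)_n/(x^2/y;x)_n$ is identically $1$, and the $z\to\infty$ limit again contributes $x^{n(n+1)/2}$, leaving
\[
\frac{(-x;x)_\infty}{(x^2;x)_\infty}\sum_{n\ge 0}x^{n(n+1)}\bigl(a^{-n}+a^{n+1}\bigr).
\]

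To finish, I would recognize the final sum as a theta function: reindexing $n\mapsto m-1$ in the $a^{n+1}$ half and $n\mapsto -m$ in the $a^{-n}$ half collapses the two pieces into the doubly infinite series $\sum_{m=-\infty}^{\infty}a^m x^{m(m-1)}$, which by the definition \eqref{RamThetaDef} equals $f(a,x^2/a)$. Combining this with the telescoping $(1-x)(x^2;x)_\infty=(x;x)_\infty$ to absorb the factor $1-x$ on the left produces $f(a,x^2/a)(-x;x)_\infty/(x;x)_\infty=f(a,x^2/a)/\varphi(-x)$, as required. The main obstacle I anticipate is the careful bookkeeping of the $z\to\infty$ limit on both sides of \eqref{Seq1}, so that the accumulated signs combine to yield $x^{n(n+1)/2}$ exactly rather than a stray $(-1)^n$; once this is verified the remainder is a routine application of Bailey's transform followed by a Jacobi-triple-product-style reindexing.
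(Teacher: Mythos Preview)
Your argument is correct and is essentially the same as the paper's: both insert the Bailey pair of Lemma~\ref{ABBP} (with base $x$) into the same limiting case of Bailey's lemma. The only difference is cosmetic---the paper cites this limiting case as Eq.~(S2BL) of~\cite[p.~5]{MS08}, whereas you rederive it explicitly as the specialization $a=q=x$, $y=-x$, $z\to\infty$ of~\eqref{Seq1} and then carry out the theta-function reindexing by hand.
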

\begin{proof}
Insert the Bailey pair $\big(\alpha_n(x,x), \beta_n(x,x)\big)$
of Lemma~\ref{ABBP}
into Eq. (S2BL) of~\cite[p. 5]{MS08}.
\end{proof}

\begin{rem}  The preceding result also follows from Andrews'  $q$-analog
 of Gauss's $_2F_1(1/ 2)$ sum~\cite[p. 526, Eq. (1.8)]{A73}:
\begin{equation}\label{aqg}
\sum_{n=0}^{\infty}\frac{(a,b;q)_nq^{n(n+1)/2}}{(q;q)_n(a b q;q^2)_n}
=\frac{(a q,b q;q^2)_{\infty}}{(q,a b q;q^2)_{\infty}}.
\end{equation}
\end{rem}

We next prove the family of false theta series identities stated at \eqref{R1partnerFT}.
This family of identities appears to be new.
\begin{thm} For $a\neq 0$ and $|x|<1$,
 \[
 \sum_{n=0}^\infty \frac{(-1)^n x^{n(n+1)/2}  (-a;x)_{n+1} (-x/a;x)_{n}  }
  {(x^{n+1};x)_{n+1}} = \sum_{n=0}^\infty (-1)^n x^{n^2+n} \left( a^{-n} + a^{n+1} \right)
\]
\end{thm}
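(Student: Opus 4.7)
The approach I would take mirrors the strategy used for the preceding theorems in this section: insert the Andrews--Berndt Bailey pair of Lemma \ref{ABBP} into a suitable limiting case of Bailey's transform. The subtlety here is that the right-hand side is a false theta series rather than an infinite product, so the specialization must be chosen so that the ratio of $q$-Pochhammer products on the product side of the transform collapses to a polynomial factor rather than producing a genuine theta quotient.

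First I would take Slater's form of Bailey's transform, equation \eqref{Seq1}, and pass to the limit $z\to\infty$. Using the asymptotics $(z;q)_n (aq/(yz))^n \to (-aq/y)^n q^{n(n-1)/2}$ together with $(aq/z;q)_\infty,(aq/yz;q)_\infty \to 1$, this produces
\begin{multline*}
\sum_{n=0}^\infty (y;q)_n \left(-\frac{aq}{y}\right)^n q^{n(n-1)/2}\beta_n(a,q) \\
= \frac{(aq/y;q)_\infty}{(aq;q)_\infty}\sum_{n=0}^\infty \frac{(y;q)_n}{(aq/y;q)_n}\left(-\frac{aq}{y}\right)^n q^{n(n-1)/2}\alpha_n(a,q),
\end{multline*}
valid for any Bailey pair $(\alpha_n(a,q),\beta_n(a,q))$. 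I would then set $y=q$ and specialize the Bailey-pair base also to $q$. These two choices force $(q;q)_n/(q;q)_n=1$ inside the right-hand sum and $(q;q)_\infty/(q^2;q)_\infty=1-q$ in the prefactor, reducing the identity to
\[
\sum_{n=0}^\infty (-1)^n q^{n(n+1)/2}(q;q)_n\beta_n(q,q) = (1-q)\sum_{n=0}^\infty (-1)^n q^{n(n+1)/2}\alpha_n(q,q).
\]
No infinite products survive on the right---this is precisely the false theta variant of Bailey's transform needed here.

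Finally, I would substitute the Bailey pair of Lemma \ref{ABBP}, namely $\alpha_n(q,q)=(a^{-n}+a^{n+1})q^{n(n+1)/2}$ and $\beta_n(q,q)=(-a;q)_{n+1}(-q/a;q)_n/(q^2;q)_{2n}$. On the left I would use $(q^2;q)_{2n}=(q;q)_{2n+1}/(1-q)$ together with the elementary identity $(q;q)_{2n+1}/(q;q)_n=(q^{n+1};q)_{n+1}$ to generate exactly the denominator $(x^{n+1};x)_{n+1}$ appearing in the statement; on the right the Pochhammer cancellation leaves the bare false theta $\sum_n(-1)^n q^{n(n+1)}(a^{-n}+a^{n+1})$. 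After cancelling the common factor $1-q$ from both sides and relabelling $q\mapsto x$, the stated identity follows. The only conceptual obstacle is recognizing that the specialization $y=q$, $z\to\infty$ of Slater's transform is the correct vehicle for an identity whose product side degenerates to a false theta series; once that limit is identified, the remaining steps are routine Bailey-chain bookkeeping.
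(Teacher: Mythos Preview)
Your proposal is correct and follows essentially the same route as the paper: insert the Andrews--Berndt Bailey pair of Lemma~\ref{ABBP} into the ``false'' limiting case of Bailey's lemma. The only difference is that the paper cites this limiting case as Eq.~(FBL) of \cite{MS08}, whereas you derive it explicitly from \eqref{Seq1} via $z\to\infty$, $y=q$, $a=q$; your derivation is accurate and the subsequent Pochhammer manipulations are exactly what is needed.
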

\begin{proof}
Insert the Bailey pair $\big(\alpha_n(x,x), \beta_n(x,x)\big)$
of Lemma~\ref{ABBP}
into Eq. (FBL) of~\cite[p. 5]{MS08}.
\end{proof}

Finally, we give a proof of the second family of false theta series identities stated at \eqref{rr22cp}. As with the family in the previous theorem, this family also appears to be new.

\begin{thm}
For $a\not = 0$ and $|x|<1$,
\begin{multline*}
1+(a-1)\sum_{n=1}^{\infty} \frac{(-a x;x)_{n-1}(-1/a,x;x)_nx^{n(n+1)/2}(-1)^n}{(x;x)_{2n}}\\
=\sum_{n=0}^{\infty}x^{n^2}(-1)^n(a^n+x^{2n+1}a^{-n-1}).
\end{multline*}
\end{thm}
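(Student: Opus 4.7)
The plan is to mirror the short Bailey-pair arguments used for the preceding theorems in this section: insert the Bailey pair $(\alpha_n(1,x),\beta_n(1,x))$ of Lemma~\ref{NewBP} into an appropriate false Bailey lemma for Bailey pairs relative to~$1$, i.e.\ the analogue for base~$1$ of the identity (FBL) of \cite[p.~5]{MS08} used above in the proof of \eqref{R1partnerFT}.

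First, one rewrites the left-hand side so that $\beta_n(1,x)=(-ax;x)_n(-1/a;x)_n/(x;x)_{2n}$ appears explicitly. Using the elementary identity $(-ax;x)_{n-1}=(-ax;x)_n/(1+ax^n)$, the LHS becomes
\[
1+(a-1)\sum_{n\geq 1}\frac{(x;x)_n(-1)^n x^{n(n+1)/2}}{1+ax^n}\,\beta_n(1,x).
\]
Applying the transform then converts this into a sum $\sum_{r\geq 0}w_r(a,x)\,\alpha_r(1,x)$ with appropriate weights $w_r$. Because $\alpha_0(1,x)=1$ and $\alpha_r(1,x)=a^{-r}x^{r(r-1)/2}+a^r x^{r(r+1)/2}$ for $r\geq 1$, this sum splits naturally into two pieces, one involving each of the powers $a^r$ and $a^{-r}$.

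The concluding step is to re-index the $a^{-r}$ piece via $r\mapsto r+1$ and recombine with the $a^r$ piece (the same kind of manipulation used in the proof of \eqref{RR22p}), yielding the false theta series $\sum_{n\geq 0}(-1)^n x^{n^2}(a^n+x^{2n+1}a^{-n-1})$ on the right. The main obstacle is identifying the correct specialisation of Bailey's transform, specifically one that produces the somewhat unusual factor $(x;x)_n(-1)^n x^{n(n+1)/2}/(1+ax^n)$ on $\beta_n(1,x)$. Once that transform is in hand, whether by direct reference to \cite{MS08} or by suitably specialising $y$ and $z$ in Slater's identity \eqref{Seq1} with the Bailey-pair base parameter set to $1$, the remaining algebra is routine.
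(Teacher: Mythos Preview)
Your proposal has a genuine gap: the ``appropriate false Bailey lemma'' you hope to invoke does not exist as a specialisation of \eqref{Seq1} (or of (FBL) in \cite{MS08}).  After your rewriting, the weight you need on $\beta_n(1,x)$ is
\[
(a-1)\,\frac{(x;x)_n(-1)^n x^{n(n+1)/2}}{1+ax^n},
\]
but in any limiting case of \eqref{Seq1} with Bailey base~$1$ the weight on $\beta_n$ has the shape $(y,z;x)_n(x/yz)^n$ and depends only on the transform parameters $y,z,x$, never on the free parameter~$a$ sitting \emph{inside} the Bailey pair.  In particular, taking $y=x$, $z\to\infty$ gives the weight $(x;x)_n(-1)^n x^{n(n-1)/2}$, and no choice of $y,z$ can manufacture the extra factor $x^n/(1+ax^n)$.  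So the ``main obstacle'' you flag is not a matter of locating the right reference; it is a structural obstruction.

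The paper therefore does \emph{not} proceed by a single insertion.  It inserts the Bailey pair of Lemma~\ref{NewBP} into exactly the specialisation above (namely \eqref{Seq1aqf} with base~$1$), obtaining
\[
\sum_{n\geq 0}\frac{(x,-ax,-1/a;x)_n(-1)^n x^{n(n-1)/2}}{(x;x)_{2n}}
=\sum_{n\geq 1}(1-x^n)(-1)^n\bigl(x^{n^2-n}a^{-n}+x^{n^2}a^n\bigr),
\]
shifts $a\mapsto a/x$, and then \emph{adds the already-proved identity \eqref{R1partnerFT}} (which comes from the \emph{other} Bailey pair, Lemma~\ref{ABBP}, relative to~$x$).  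The combination of these two Bailey-pair computations is what produces the factor $(-ax;x)_{n-1}$ and the prefactor $(a-1)$ on the series side; neither arises from a single transform.  Your re-indexing idea on the $\alpha$-side is fine in spirit, but it only becomes available after both identities are in hand.
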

\begin{proof}
Set $a=1$ in \eqref{Seq1aqf}, replace $q$ with $x$ and insert the Bailey pair in Lemma \eqref{NewBP} (with $q$ replaced with $x$) to get
\begin{equation*}
\sum_{n=0}^{\infty}\frac{(x,-a x, -1/a;x)_n (-1)^n x^{n(n-1)/2}}{(x;x)_{2n}}
=\sum_{n=1}^{\infty}(1-x^n)(-1)^n(x^{n^2-n}a^{-n}+x^{n^2}a^n).
\end{equation*}
Replace $a$ with $a/x$ and re-index one of the resulting sums to get
\begin{multline}\label{rr2ceq2c}
\sum_{n=0}^{\infty}\frac{(x,-a, -x/a;x)_n (-1)^n x^{n(n-1)/2}}{(x;x)_{2n}}=
1+\sum_{n=1}^{\infty}x^{n^2}(-1)^n(a^{-n}-a^n)\\
-\sum_{n=0}^{\infty}x^{n^2+n}(-1)^n(a^{-n}+a^{n+1}).
\end{multline}
Upon noting that the second sum is the right side of \eqref{R1partnerFT},  re-index the left side of \eqref{R1partnerFT} by separating off the $n=0$ term and replacing $n$ with $n+1$, add the resulting series to \eqref{rr2ceq2c} and simplify, re-index the resulting sum by replacing $n$ with $n-1$ and the result follows after some further simple manipulations.
\end{proof}

\section{Special Cases}
Like Ramanujan's identities~\eqref{R1} and~\eqref{R2}, our
identities generalize a number of identities from the literature.

\begin{table}[ht] \caption{Special cases of ~\eqref{R1partner}}
\begin{tabular}{|c|c| c|}
\hline
 $a$ & $x$ & References \\
 \hline\hline
  $ -1 $  & $\sqrt{q}$  & Ramanujan~\cite[p. 87, Entry 4.2.12]{AB09},
Bailey~\cite[p. 72, Eq. (10)]{B35},  \\
& &  Slater~\cite[p. 154, Eq. (22)]{S52} \\ \hline
$-e^{2\pi i/3}$ & $\sqrt{q}$ & Dyson~\cite[p. 434, Eq. (B3)]{B47},
Slater~\cite[p. 161, Eq. (92)]{S52} \\ \hline
$ -i $ & $\sqrt{q}$ & Ramanujan~\cite[p. 254, Eq. (11.3.5)]{AB05},
\\&&
  Slater~\cite[p. 154, Eq. (28)]{S52}\\ \hline
 $q$ & $q$ & Slater~\cite[p. 154, Eq. (27) and p. 161, Eq. (87)]{S52} \\ \hline
 $-q$ & $q^{3/2}$ & Bailey~\cite[p. 422, Eq. (1.7)]{B47},
  \\&&
  Slater~\cite[p. 156, Eq. (40--corrected)]{S52}
 \\ \hline
  $-q^2$ & $q^{3/2}$ & Bailey~\cite[p. 422, Eq. (1.8)]{B47},
   \\&&
   Slater~\cite[p. 156, Eq. (41--corrected)]{S52}
 \\ \hline
 $q$ & $q^2$ & Slater~\cite[p. 157, Eq. (57)]{S52}\\ \hline
  $-q$ & $q^2$ & Slater~\cite[p. 157, Eq. (55)]{S52}\\ \hline

\end{tabular}
\end{table}

%\begin{table}[h]\caption{Special Cases of~\eqref{R1partner2}}
%\begin{tabular}{|c|c|c|}
%\hline
%$a$ & $x$ & References \\
% \hline\hline
% $1$ & $\sqrt{q}$ & Ramanujan~\cite[Entry 4.2.8]{AB09}; Slater~\cite[p. 152, Eq. %(6)--corrected]{S52} \\ \hline
% $q^{-1}$ & $q$ & Bowman et al.~\cite[Eq. (2.13)]{BMS09} \\ \hline
%$-q^{-1}$ & $q^2$ & Slater~\cite[p. 157, Eq. (53)]{S52}\\
%\hline
% \end{tabular}
% \end{table}

Identity~\eqref{R1partnerSS} with $x=q^2$ and $a=q$ yields Slater~\cite[p. 153, Eq. (11)]{S52}.
Identity~\eqref{R1partnerFT} with $x=q^2$ and $a=q$ yields McLaughlin et al.~\cite[Eq. (2.10)]{MSZ09}.

\section{Some Summation Formulae deriving from the Jacobi Triple Product Identity and the Quintuple Product Identity}\label{jtpqtpimp}

We will make use of the following result, which is an immediate consequence of the Jacobi Triple Product identity.
\begin{lem}
Let $m$ be a positive integer.
For $z\in \mathbb{C}$, $z\not = 0$, and $|q|<1$,
\begin{multline}\label{jtp2}
(-z,-q/z,q;q)_{\infty}\\= \sum_{r=0}^{m-1} \left(
- q^{m^2/2}z^m q^{m(r-1/2)}, -\frac{ q^{m^2/2}}{z^m q^{m(r-1/2)}}, q^{m^2};q^{m^2}
\right)_{\infty}q^{r(r-1)/2}z^{r}.
\end{multline}
\end{lem}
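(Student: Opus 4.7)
The plan is to prove the lemma by an $m$-dissection of the Jacobi Triple Product applied to the left-hand side. First I would use the product-to-sum form of the JTP recalled in equation \eqref{RamThetaDef} with $a=z$, $b=q/z$ (so $ab=q$) to rewrite
\[
(-z,-q/z,q;q)_{\infty} = f(z,q/z) = \sum_{j=-\infty}^{\infty} z^{j(j+1)/2}(q/z)^{j(j-1)/2} = \sum_{j=-\infty}^{\infty} z^{j} q^{j(j-1)/2}.
\]

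Next I would dissect the bilateral sum according to residue classes modulo $m$. Writing $j = mn + r$ with $0 \le r \le m-1$ and $n \in \mathbb{Z}$, the exponent $j(j-1)/2$ expands as
\[
\frac{(mn+r)(mn+r-1)}{2} = \frac{m^{2}n^{2}}{2} + mn\!\left(r - \tfrac{1}{2}\right) + \frac{r(r-1)}{2},
\]
so that the whole series factors as
\[
\sum_{r=0}^{m-1} z^{r} q^{r(r-1)/2} \sum_{n=-\infty}^{\infty} \bigl(z^{m} q^{m(r-1/2)}\bigr)^{n} q^{m^{2}n^{2}/2}.
\]

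Then I would recognize the inner sum over $n$ as a Jacobi Triple Product to base $Q := q^{m^2}$. Using the identity $n^{2}/2 = n(n-1)/2 + n/2$, the inner sum becomes
\[
\sum_{n=-\infty}^{\infty} \bigl(z^{m} q^{m^{2}/2} q^{m(r-1/2)}\bigr)^{n} Q^{n(n-1)/2} = f(A_{r}, B_{r}),
\]
where $A_{r} = z^{m} q^{m^{2}/2} q^{m(r-1/2)}$ and $B_{r} = q^{m^{2}/2}/\bigl(z^{m} q^{m(r-1/2)}\bigr)$ satisfy $A_{r} B_{r} = q^{m^{2}}$. Applying the product form \eqref{RamThetaDef} to $f(A_{r}, B_{r})$ converts each inner sum into exactly the infinite product appearing on the right-hand side of \eqref{jtp2}, completing the proof.

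The argument is essentially bookkeeping; the only real obstacle is keeping the half-integer powers of $q$ straight when matching the inner theta series to $f(A_{r}, B_{r})$ with $A_{r} B_{r} = q^{m^{2}}$. Since $m$ may be even or odd, convergence and the absence of any branch issues are automatic because $A_{r}$ and $B_{r}$ enter only through the products $A_{r} B_{r} = q^{m^{2}}$ and the combinations $q^{m^{2}/2} q^{m(r-1/2)}$, $q^{m^{2}/2} q^{-m(r-1/2)}$, whose exponents $m^{2}/2 \pm m(r-1/2) = m(m \pm 2r \mp 1)/2$ are integers for every integer $r$.
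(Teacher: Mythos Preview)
Your proof is correct and follows essentially the same approach as the paper: apply the Jacobi triple product \eqref{RamThetaDef} with $a=z$, $b=q/z$ to write the left side as $\sum_{j} z^{j} q^{j(j-1)/2}$, split into residue classes $j=mn+r$, and then apply \eqref{RamThetaDef} again to each inner sum. Your write-up is in fact more explicit than the paper's, supplying the exponent bookkeeping and the verification that the half-integer powers $q^{m^{2}/2}q^{\pm m(r-1/2)}$ combine to integer powers of $q$.
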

\begin{proof}
Set $a=z$ and $b=q/z$ in \eqref{RamThetaDef} to get, after considering sums in the $m$
arithmetic progressions $mk+r$, $0 \leq r <m$, that
\begin{align*}
(-z,-q/z,q;q)_{\infty}&=\sum_{k=-\infty}^{\infty}z^k q^{k(k-1)/2}
%\\&
=\sum_{r=0}^{m-1}\sum_{k=-\infty}^{\infty}z^{m k+r} q^{(m k+r)(m k+r-1)/2}\\
&=
\sum_{r=0}^{m-1}q^{r(r-1)/2}z^{r} \sum_{k=-\infty}^{\infty}\left(
z^{m} q^{(m^2-m+2mr)/2} \right)^{k} q^{m^2(k^2-k)/2}.
\end{align*}
The result follows after applying \eqref{RamThetaDef} to each of the inner sums.
\end{proof}

\begin{thm}\label{tgen1}
Let $m$ be a positive integer. For $z, q \in \mathbb{C}$, $z, q \not
= 0$, and $|q|<1$,
\begin{multline}\label{geneq1}
\sum_{r=0}^{m-1}q^{3r(r-1)/2}z^r
\sum_{n=0}^{\infty}\frac{q^{m^2n^2}\left(-q^{m^2/2}z^m q^{3
m(r-1/2)},
\displaystyle{\frac{-q^{m^2/2}}{z^m q^{3 m(r-1/2)}}}\,;q^{m^2} \right)_n}{(q^{m^2};q^{m^2})_{2n}}\\
=\frac{(-z,-q^3/z,q^3\,;q^3)_{\infty}}{(q^{m^2};q^{m^2})_{\infty}}.
\end{multline}
\end{thm}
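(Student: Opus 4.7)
The plan is to recognize that the theorem packages together $m$ separate applications of Ramanujan's identity \eqref{R1}, one for each residue class modulo $m$, and that the resulting sum of infinite products is precisely the right-hand side of the preceding lemma with $q$ replaced by $q^3$.

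First, I would specialize \eqref{R1} by setting $x = q^{m^2/2}$ and $a = z^m q^{3m(r-1/2)}$ for each fixed $r \in \{0,1,\dots,m-1\}$. A direct check shows $ax = q^{m^2/2} z^m q^{3m(r-1/2)}$ and $x/a = q^{m^2/2}/(z^m q^{3m(r-1/2)})$, which are exactly the parameters appearing in the inner sum of the theorem. Moreover $x^{2n^2} = q^{m^2 n^2}$ and $(x^2;x^2)_{2n} = (q^{m^2};q^{m^2})_{2n}$, so the left-hand side of \eqref{R1} matches the inner sum over $n$.

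Second, I would translate the product side $f(ax^3,x^3/a)/f(-x^2)$. Since $x^3 = q^{3m^2/2}$, the exponent computations give
\[ ax^3 = z^m q^{3m(m+2r-1)/2}, \qquad x^3/a = q^{3m(m-2r+1)/2}/z^m, \qquad x^6 = q^{3m^2}, \]
so \eqref{R1} yields, for each $r$,
\[ \sum_{n=0}^{\infty}\frac{q^{m^2 n^2}\left(-q^{m^2/2}z^m q^{3m(r-1/2)},\,\frac{-q^{m^2/2}}{z^m q^{3m(r-1/2)}};q^{m^2}\right)_n}{(q^{m^2};q^{m^2})_{2n}} = \frac{(-z^m q^{3m(m+2r-1)/2},\,-q^{3m(m-2r+1)/2}/z^m,\,q^{3m^2};q^{3m^2})_\infty}{(q^{m^2};q^{m^2})_\infty}. \]

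Third, I would multiply this identity by $q^{3r(r-1)/2} z^r$ and sum over $r = 0, 1, \dots, m-1$. The left-hand side is then precisely the left-hand side of \eqref{geneq1}. On the right-hand side, a factor of $1/(q^{m^2};q^{m^2})_\infty$ comes out, and what remains is
\[ \sum_{r=0}^{m-1} q^{3r(r-1)/2} z^r \left(-q^{3m^2/2} z^m q^{3m(r-1/2)},\,-q^{3m^2/2}/(z^m q^{3m(r-1/2)}),\,q^{3m^2};q^{3m^2}\right)_\infty, \]
where I rewrote $q^{3m(m+2r-1)/2} = q^{3m^2/2} q^{3m(r-1/2)}$ to align with the lemma's format.

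Finally, I would invoke the preceding lemma \eqref{jtp2} with $q$ replaced by $q^3$ (keeping $z$ and $m$ unchanged). The resulting sum is identically $(-z,-q^3/z,q^3;q^3)_\infty$, which delivers the claimed right-hand side. No step should present any real obstacle beyond careful bookkeeping of the exponents in the substitution $x = q^{m^2/2}$, $a = z^m q^{3m(r-1/2)}$; the essential content of the theorem is simply the dissection lemma layered on top of \eqref{R1}.
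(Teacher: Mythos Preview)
Your proposal is correct and follows essentially the same approach as the paper: both apply \eqref{R1} with $x=q^{m^2/2}$ and $a=z^m q^{3m(r-1/2)}$ for each $r$, and combine the resulting products via the dissection lemma \eqref{jtp2} with $q\mapsto q^3$. The only difference is the order of presentation (you go from \eqref{R1} to the lemma, the paper goes from the lemma to \eqref{R1}), which is immaterial.
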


\begin{proof}
Replace $q$ with $q^3$ in \eqref{jtp2}, and then divide both sides of that
identity by $(q^{m^2};q^{m^2})_{\infty}$. Next, use \eqref{R1} to replace
each of the products in the inner sum with the basic hypergeometric series
given by the left side of \eqref{R1} (replace $x$ with $q^{m^2/2}$ and $a$
with $z q^{3m(r-1/2)}$), and the result follows.
\end{proof}

\begin{cor}
For $z, q \not = 0$ and $|q|<1$, there holds
\begin{multline}\label{geneq1a}
\sum_{n=0}^{\infty}\frac{\displaystyle{ \left(-q^5/z^2,-z^2/q
;q^4\right)_n}
q^{4n^2}}{(q^4;q^4)_{2n}}+z\sum_{n=0}^{\infty}\frac{\displaystyle{
\left(-q^5z^2,-1/qz^2;q^4\right)_n} q^{4n^2}}{(q^4;q^4)_{2n}}
\\
=\frac{\left(-z,-q^3/z,q^3\,;q^3\right)_{\infty}}
{\left(q^4\,;q^4\right)_{\infty}}
\end{multline}
and
\begin{multline}\label{geneq1b}
\sum_{n=0}^{\infty}\frac{\displaystyle{ \left(-q^9/z^3,-z^3
;q^9\right)_n}
q^{9n^2}}{(q^9;q^9)_{2n}}
+z\sum_{n=0}^{\infty}\frac{\displaystyle{ \left(-q^9z^3,-1/z^3
;q^9\right)_n}
q^{9n^2}}{(q^9;q^9)_{2n}}\\
+q^3z^2\sum_{n=0}^{\infty}\frac{\displaystyle{
\left(-q^{18}z^3,-1/z^3q^9 ;q^9\right)_n} q^{9n^2}}{(q^9;q^9)_{2n}}
=\frac{\left(-z,-q^3/z,q^3\,;q^3\right)_{\infty}}
{\left(q^9\,;q^9\right)_{\infty}}.
\end{multline}
\end{cor}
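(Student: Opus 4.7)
The plan is to obtain both identities as direct specialisations of Theorem \ref{tgen1}: take $m=2$ for \eqref{geneq1a} and $m=3$ for \eqref{geneq1b}. Since Theorem \ref{tgen1} has already been established, what remains is purely a matter of recording what each factor inside \eqref{geneq1} becomes for the relevant values of $r$ and checking that the result agrees termwise with the stated corollary.

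For \eqref{geneq1a}, with $m=2$ the outer sum runs over $r\in\{0,1\}$, and I note that $q^{m^2/2}=q^{2}$ and $3m(r-1/2)=6r-3$. The paired Pochhammer arguments $\bigl(-q^{m^2/2}z^m q^{3m(r-1/2)},\ -q^{m^2/2}/(z^m q^{3m(r-1/2)})\bigr)$ therefore specialise to $(-z^2/q,\,-q^5/z^2)$ for $r=0$ with prefactor $q^{3r(r-1)/2}z^r=1$, and to $(-q^5z^2,\,-1/(qz^2))$ for $r=1$ with prefactor $z$. Combined with $q^{m^2 n^2}=q^{4n^2}$ and $(q^{m^2};q^{m^2})_{2n}=(q^4;q^4)_{2n}$ in each inner sum, and with $(q^{m^2};q^{m^2})_\infty=(q^4;q^4)_\infty$ on the right, this reproduces \eqref{geneq1a} exactly.

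For \eqref{geneq1b}, the same procedure with $m=3$ produces three outer terms indexed by $r\in\{0,1,2\}$. Here $q^{m^2/2}=q^{9/2}$ and $3m(r-1/2)=9r-9/2$, so the half-integer powers of $q$ always appear symmetrically inside each Pochhammer pair and cancel to leave integer powers. A short computation gives the pair $(-z^3,\,-q^9/z^3)$ for $r=0$ (prefactor $1$), $(-z^3q^9,\,-1/z^3)$ for $r=1$ (prefactor $z$), and $(-z^3q^{18},\,-1/(z^3q^9))$ for $r=2$ (prefactor $q^3z^2$). These match the three inner sums on the left of \eqref{geneq1b}, and the right-hand side of \eqref{geneq1} reads $(-z,-q^3/z,q^3;q^3)_\infty/(q^9;q^9)_\infty$, as required.

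No real obstacle is expected; the only point meriting attention is the presence of half-integer powers of $q$ when $m$ is odd, but because they appear symmetrically within each Pochhammer pair they cancel on sight, and the specialisations go through without incident.
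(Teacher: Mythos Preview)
Your proposal is correct and follows exactly the same approach as the paper's own proof, which simply states that the two identities are the cases $m=2$ and $m=3$ of Theorem~\ref{tgen1}. You have merely been more explicit in tracking the specialisations, and your computations check out.
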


\begin{proof}
These are, respectively, the cases $m=2$ and $m=3$ of Theorem \ref{tgen1}.
\end{proof}

\begin{rem} The case $m=1$ of Theorem \ref{tgen1} is of course
Ramanujan's identity at \eqref{R1}, so Theorem \ref{tgen1} may be
regarded as embedding Ramanujan's identity  in an infinite family of
 identities.
\end{rem}

In a similar manner, \eqref{R1partner} leads to the following
result.

\begin{thm}\label{tgen2}
Let $m$ be a positive integer. For $z, q \in \mathbb{C}$, $z, q \not
= 0$, and $|q|<1$,
\begin{multline}\label{geneq2}
\sum_{r=0}^{m-1}q^{3r(r-1)/2}z^r \times \\
\sum_{n=0}^{\infty}\frac{q^{m^2(n^2+n)}\left(-q^{3m^2/2}z^m q^{3
m(r-1/2)};q^{m^2} \right)_{n+1}
\left(\displaystyle{\frac{-1}{q^{m^2/2}z^m q^{3
m(r-1/2)}}}\,;q^{m^2} \right)_n}
{(q^{m^2};q^{m^2})_{2n+1}}\\
=\frac{(-z,-q^3/z,q^3\,;q^3)_{\infty}}{(q^{m^2};q^{m^2})_{\infty}}.
\end{multline}
\end{thm}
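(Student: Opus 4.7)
The plan is to mimic the proof of Theorem \ref{tgen1} step for step, with \eqref{R1partner} playing the role that Ramanujan's identity \eqref{R1} played there. Both identities express a summable two-variable hypergeometric series as an infinite product divided by $f(-x^2)$; the only structural difference is that \eqref{R1partner} yields $f(a,x^6/a)=(-a,-x^6/a,x^6;x^6)_\infty$, whereas \eqref{R1} yields $f(ax^3,x^3/a)=(-ax^3,-x^3/a,x^6;x^6)_\infty$. These two products differ only by the reparametrization $a\mapsto a/x^3$, so the same ``split the Jacobi Triple Product into $m$ arithmetic progressions'' mechanism that proved Theorem \ref{tgen1} should go through here, up to shifting the specialization of $a$ to absorb the missing factor of $x^3$.

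Concretely, I would first take Lemma \eqref{jtp2}, replace $q$ by $q^3$, and divide both sides by $(q^{m^2};q^{m^2})_\infty$. Then, for each $r\in\{0,1,\ldots,m-1\}$, I would invoke \eqref{R1partner} with the specialization $x=q^{m^2/2}$ and $a=q^{3m^2/2}z^m q^{3m(r-1/2)}$. With these choices, $x^2=q^{m^2}$ gives $x^{2n(n+1)}=q^{m^2(n^2+n)}$ and $(x^2;x^2)_{2n+1}=(q^{m^2};q^{m^2})_{2n+1}$; the Pochhammer $(-a;x^2)_{n+1}$ matches the first factor inside the inner sum of \eqref{geneq2}; and the computation $x^2/a=1/(q^{m^2/2}z^m q^{3m(r-1/2)})$ shows that $(-x^2/a;x^2)_n$ matches the second factor. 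On the product side, $x^6/a=q^{3m^2/2}/(z^m q^{3m(r-1/2)})$ and $x^6=q^{3m^2}$, so $f(a,x^6/a)=(-q^{3m^2/2}z^m q^{3m(r-1/2)},-q^{3m^2/2}/(z^m q^{3m(r-1/2)}),q^{3m^2};q^{3m^2})_\infty$, which is exactly the $r$-th factor produced when Lemma \eqref{jtp2} is applied with $q\to q^3$. Multiplying each such instance of \eqref{R1partner} by $q^{3r(r-1)/2}z^r$ and summing on $r$ from $0$ to $m-1$ then collapses the product side, via the split identity, to $(-z,-q^3/z,q^3;q^3)_\infty/(q^{m^2};q^{m^2})_\infty$, yielding \eqref{geneq2}.

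The main obstacle is purely bookkeeping: one must verify that the shifted specialization $a=q^{3m^2/2}z^m q^{3m(r-1/2)}$ (as opposed to the $a=z^m q^{3m(r-1/2)}$ used in the proof of Theorem \ref{tgen1}) aligns the exponents of $q$ in every factor on both sides, in particular that $x^2/a$ and $x^6/a$ simplify to the arguments appearing in \eqref{geneq2} and in the right-hand side of \eqref{jtp2}-with-$q\to q^3$, respectively. Once these substitutions are checked, no new analytic ideas beyond those already used for Theorem \ref{tgen1} are required.
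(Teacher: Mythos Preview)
Your proposal is correct and is exactly the argument the paper has in mind: the paper itself omits the proof with the remark that it ``essentially mirrors that of Theorem~\ref{tgen1},'' and your write-up carries out precisely that mirroring, including the correct shifted specialization $a=q^{3m^2/2}z^m q^{3m(r-1/2)}$ needed so that $f(a,x^6/a)$ from \eqref{R1partner} matches the $r$-th triple product in \eqref{jtp2} with $q\to q^3$.
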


\begin{proof}
The proof is omitted, since it essentially mirrors that of Theorem
\ref{tgen1}.
\end{proof}
\begin{cor}
For $z, q \not = 0$ and $|q|<1$, there holds
\begin{multline}\label{geneq2a}
\sum_{n=0}^{\infty}\frac{\displaystyle{ \left(-q^3 z^2;q^4\right)_{n+1}\left(-q/z^2
;q^4\right)_n}
q^{4n^2+4n}}{(q^4;q^4)_{2n+1}}\\+z\sum_{n=0}^{\infty}\frac{\displaystyle{ \left(-q^9 z^2;q^4\right)_{n+1}\left(-1/q^5z^2
;q^4\right)_n}
q^{4n^2+4n}}{(q^4;q^4)_{2n+1}}
=\frac{\left(-z,-q^3/z,q^3\,;q^3\right)_{\infty}}
{\left(q^4\,;q^4\right)_{\infty}}
\end{multline}
and
\begin{multline}\label{geneq2b}
\sum_{n=0}^{\infty}\frac{\displaystyle{ \left(-q^9 z^3;q^9\right)_{n+1}\left(-1/z^3
;q^9\right)_n}
q^{9n^2+9n}}{(q^9;q^9)_{2n+1}}\\
+z\sum_{n=0}^{\infty}\frac{\displaystyle{ \left(-q^{18} z^3;q^9\right)_{n+1}\left(-1/q^9z^3
;q^9\right)_n}
q^{9n^2+9n}}{(q^9;q^9)_{2n+1}}\\
+q^3z^2\sum_{n=0}^{\infty}\frac{\displaystyle{ \left(-q^{27} z^3;q^9\right)_{n+1}\left(-1/q^{18}z^3
;q^9\right)_n}
q^{9n^2+9n}}{(q^9;q^9)_{2n+1}}
=\frac{\left(-z,-q^3/z,q^3\,;q^3\right)_{\infty}}
{\left(q^9\,;q^9\right)_{\infty}}.
\end{multline}
\end{cor}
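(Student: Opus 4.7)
The plan is to obtain both identities as direct specializations of Theorem~\ref{tgen2}, taking $m=2$ for \eqref{geneq2a} and $m=3$ for \eqref{geneq2b}. No new machinery is needed; the entire argument is bookkeeping on the substitution, so the key task is verifying that the half-integer powers of $q$ appearing in \eqref{geneq2} all combine into the integer exponents displayed in the corollary.

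First I would take $m=2$ in \eqref{geneq2} and expand the outer $r$-sum over $r=0,1$. For $r=0$, the prefactor $q^{3r(r-1)/2}z^r$ equals $1$, and the two bases of the inner $q$-Pochhammer symbols simplify as
\begin{equation*}
-q^{3m^2/2}z^m q^{3m(r-1/2)} = -q^{6}z^{2}q^{-3} = -q^{3}z^{2},
\qquad
\frac{-1}{q^{m^2/2}z^m q^{3m(r-1/2)}} = \frac{-1}{q^{2}z^{2}q^{-3}} = \frac{-q}{z^{2}}.
\end{equation*}
For $r=1$, the prefactor is $z$, and the same computation yields $-q^{9}z^{2}$ and $-1/(q^{5}z^{2})$. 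Combined with $q^{m^2(n^2+n)}=q^{4n^2+4n}$ and $(q^{m^2};q^{m^2})_{2n+1}=(q^{4};q^{4})_{2n+1}$, the resulting two summands are exactly the two sums on the left side of \eqref{geneq2a}, while the right side of \eqref{geneq2} reproduces the displayed infinite product.

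Then I would repeat this exercise with $m=3$ and the outer sum running over $r=0,1,2$, producing prefactors $1$, $z$, and $q^{3}z^{2}$, and nine $q$-Pochhammer arguments that simplify analogously (for example at $r=2$: $-q^{27/2}z^{3}q^{27/2}=-q^{27}z^{3}$ and $-q^{-9/2}z^{-3}q^{-27/2}=-1/(q^{18}z^{3})$). These match the three summands on the left side of \eqref{geneq2b}.

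The only conceptual point to monitor — and the only place a slip is likely — is the cancellation of the half-integer exponents. This holds because $\tfrac{3m(m-1)}{2}$ and $\tfrac{m(m+3)}{2}$ are always integers (since one of $m$, $m-1$ is even, and likewise for $m$, $m+3$), so the exponent sums $\tfrac{3m^2}{2}+3m(r-\tfrac12)$ and $-\tfrac{m^2}{2}-3m(r-\tfrac12)$ are integers for every admissible $m$ and $r$. Once this is noted, both identities follow mechanically.
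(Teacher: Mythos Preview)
Your proposal is correct and is precisely the paper's approach: both identities are obtained by specializing Theorem~\ref{tgen2} at $m=2$ and $m=3$, respectively. The paper states this as a one-line proof, whereas you have helpfully written out the substitution details (with a minor slip in identifying the second combined exponent as $\tfrac{m(m+3)}{2}$ rather than $-\tfrac{m(m-3)}{2}$, though the integrality argument and all the explicit $m=2,3$ computations are correct).
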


\begin{proof}
These are, respectively, the cases $m=2$ and $m=3$ of Theorem \ref{tgen2}.
\end{proof}
\begin{rem} The case $m=1$ gives \eqref{R1partner} above, so this
identity is also the first in an infinite family of identities.
\end{rem}

\begin{thm}\label{tgen3}
Let $m$ be a positive integer.  For $z, q \in \mathbb{C}$, $z, q
\not = 0$, and $|q|<1$,
\begin{multline}\label{geneq3}
\sum_{r=0}^{m-1}q^{2r^2}z^r
\sum_{n=0}^{\infty}\frac{q^{m^2n^2}\left(-q^{m^2}z^m q^{4 m r},
\displaystyle{\frac{-q^{m^2}}{z^m q^{4 m r}}}\,;q^{2m^2} \right)_n}
{(q^{m^2};q^{2m^2})_{n}(q^{4m^2};q^{4m^2})_{n}}\\
=(-z
q^2,-q^2/z,q^4\,;q^4)_{\infty}\frac{(-q^{m^2};q^{2m^2})_{\infty}}{(q^{2m^2};q^{2m^2})_{\infty}}.
\end{multline}
\end{thm}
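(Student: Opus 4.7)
The proof will follow the same template used for Theorems \ref{tgen1} and \ref{tgen2}: dissect a Jacobi triple product by arithmetic progressions modulo $m$, then convert each resulting infinite product into a basic hypergeometric series via one of Ramanujan's identities --- but now using \eqref{R2} in place of \eqref{R1} or \eqref{R1partner}, with a new dissection suited to the theta function on the right-hand side.

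First, I would establish the appropriate dissection lemma. Note that the Jacobi triple product gives
\begin{equation*}
(-zq^2,-q^2/z,q^4;q^4)_{\infty}=f(zq^2,q^2/z)=\sum_{k=-\infty}^{\infty}z^{k}q^{2k^{2}}.
\end{equation*}
Writing $k=mj+r$ with $0\le r\le m-1$ and $j\in\mathbb{Z}$, and collecting the inner sums via another application of \eqref{RamThetaDef}, I obtain
\begin{equation*}
(-zq^2,-q^2/z,q^4;q^4)_{\infty}=\sum_{r=0}^{m-1}z^{r}q^{2r^{2}}\bigl(-z^{m}q^{4mr+2m^{2}},-q^{2m^{2}-4mr}/z^{m},q^{4m^{2}};q^{4m^{2}}\bigr)_{\infty}.
\end{equation*}
This is the analogue of \eqref{jtp2} appropriate to $f(zq^2,q^2/z)$.

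Next, in Ramanujan's identity \eqref{R2} I would substitute $x\mapsto q^{m^{2}}$ and $a\mapsto z^{m}q^{4mr}$. A direct check shows that the parameters $-ax$ and $-x/a$ become exactly $-q^{m^{2}}z^{m}q^{4mr}$ and $-q^{m^{2}}/(z^{m}q^{4mr})$, the bases $x^{2}$ and $x^{4}$ become $q^{2m^{2}}$ and $q^{4m^{2}}$, and $f(ax^{2},x^{2}/a)$ becomes the $r$-th inner product appearing in the dissection above. Thus each inner sum of \eqref{geneq3} equals the corresponding product divided by $\psi(-q^{m^{2}})=(q^{m^{2}};q^{2m^{2}})_{\infty}(q^{4m^{2}};q^{4m^{2}})_{\infty}$. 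Summing on $r$ with the weights $z^{r}q^{2r^{2}}$ and using the dissection above, the left-hand side of \eqref{geneq3} collapses to
\begin{equation*}
\frac{(-zq^{2},-q^{2}/z,q^{4};q^{4})_{\infty}}{(q^{m^{2}};q^{2m^{2}})_{\infty}(q^{4m^{2}};q^{4m^{2}})_{\infty}}.
\end{equation*}

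Finally I would verify the purely infinite-product identity
\begin{equation*}
\frac{1}{(q^{m^{2}};q^{2m^{2}})_{\infty}(q^{4m^{2}};q^{4m^{2}})_{\infty}}=\frac{(-q^{m^{2}};q^{2m^{2}})_{\infty}}{(q^{2m^{2}};q^{2m^{2}})_{\infty}},
\end{equation*}
which is equivalent to $(q^{m^{2}};q^{2m^{2}})_{\infty}(-q^{m^{2}};q^{2m^{2}})_{\infty}(q^{4m^{2}};q^{4m^{2}})_{\infty}=(q^{2m^{2}};q^{2m^{2}})_{\infty}$; this follows by combining $(1-x)(1+x)=1-x^{2}$ to merge the first two factors into $(q^{2m^{2}};q^{4m^{2}})_{\infty}$ and then splitting $(q^{2m^{2}};q^{2m^{2}})_{\infty}$ into even and odd parts. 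This converts the collapsed left-hand side into the stated right-hand side of \eqref{geneq3}. The argument is almost entirely bookkeeping: the only step requiring any thought is choosing the JTP dissection of $f(zq^{2},q^{2}/z)$ and matching it to the particular shape of \eqref{R2}, since the different exponent $q^{2r^{2}}$ (rather than $q^{3r(r-1)/2}$) and the presence of $\psi(-x)$ rather than $f(-x^{2})$ in the denominator force the adjoining factor $(-q^{m^{2}};q^{2m^{2}})_{\infty}/(q^{2m^{2}};q^{2m^{2}})_{\infty}$ on the right.
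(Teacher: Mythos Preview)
Your proof is correct and follows essentially the same approach as the paper's: dissect the triple product $(-zq^{2},-q^{2}/z,q^{4};q^{4})_{\infty}$ into $m$ pieces and then apply \eqref{R2} with $x=q^{m^{2}}$, $a=z^{m}q^{4mr}$ to each piece. The only cosmetic differences are that the paper obtains the dissection by substituting $q\mapsto q^{4}$, $z\mapsto zq^{2}$ directly into Lemma~\eqref{jtp2} rather than re-deriving it, and it multiplies through by $(-q^{m^{2}};q^{2m^{2}})_{\infty}/(q^{2m^{2}};q^{2m^{2}})_{\infty}=1/\psi(-q^{m^{2}})$ at the outset rather than converting this factor at the end.
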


\begin{proof}
The proof is similar to the proof of Theorem \ref{tgen1}. Replace
$q$ with $q^4$ and then $z$ with $z q^2$ in \eqref{jtp2}, and then
multiply both sides of that identity by
\[(-q^{m^2};q^{2m^2})_{\infty}/(q^{2m^2};q^{2m^2})_{\infty}.\] Then
use \eqref{R2} to replace each of the products in the inner sum with
the corresponding basic hypergeometric series on the left side of
\eqref{R2} (replace $x$ with $q^{m^2}$ and $a$ with $z q^{4 m r}$),
and the result follows.
\end{proof}
\begin{rem} Ramanujan's identity at \eqref{R2} is the case $m=1$ of the
 above theorem, placing this identity also in an infinite family of identities.
\end{rem}

\begin{cor}
For $z, q \not = 0$ and $|q|<1$, there holds
\begin{multline}\label{geneq3a}
\sum_{n=0}^{\infty}\frac{\displaystyle{ \left(-q^4/z^2,-z^2q^4
;q^8\right)_n} q^{4n^2}}{(q^4;q^8)_{n}(q^{16};q^{16})_{n}}
+q^2z\sum_{n=0}^{\infty}\frac{\displaystyle{
\left(-1/z^2q^4,-z^2q^{12} ;q^8\right)_n}
q^{4n^2}}{(q^4;q^8)_{n}(q^{16};q^{16})_{n}}
\\
=(-z q^2,-q^2/z,q^4\,;q^4)_{\infty}
\frac{(-q^{4};q^{8})_{\infty}}{(q^{8};q^{8})_{\infty}}
\end{multline}
and
{\allowdisplaybreaks
\begin{multline}\label{geneq3b}
\sum_{n=0}^{\infty}\frac{\displaystyle{ \left(-q^9/z^3,-z^3q^9
;q^{18}\right)_n} q^{9n^2}}{(q^9;q^{18})_{n}(q^{36};q^{36})_{n}}
+q^2z\sum_{n=0}^{\infty}\frac{\displaystyle{
\left(-1/z^3q^3,-z^3q^{21} ;q^{18}\right)_n}
q^{9n^2}}{(q^9;q^{18})_{n}(q^{36};q^{36})_{n}}\\
+q^8z^2\sum_{n=0}^{\infty}\frac{\displaystyle{
\left(-1/z^3q^{15},-z^3q^{33} ;q^{18}\right)_n}
q^{9n^2}}{(q^9;q^{18})_{n}(q^{36};q^{36})_{n}} =(-z
q^2,-q^2/z,q^4\,;q^4)_{\infty}
\frac{(-q^{9};q^{18})_{\infty}}{(q^{18};q^{18})_{\infty}}.
\end{multline}}
\end{cor}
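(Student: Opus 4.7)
The plan is to obtain the Corollary as a direct specialization of Theorem \ref{tgen3}: identity \eqref{geneq3a} is the $m=2$ instance and identity \eqref{geneq3b} is the $m=3$ instance. No new machinery is required beyond the bookkeeping of exponents in the parameters of the $q$-Pochhammer symbols that appear on each side of \eqref{geneq3}.

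First I would take $m=2$ in Theorem \ref{tgen3}. Then $m^2=4$, $2m^2=8$, $4m^2=16$, and the outer sum runs over $r\in\{0,1\}$. The outer weight $q^{2r^2}z^r$ becomes $1$ for $r=0$ and $q^2z$ for $r=1$. The Pochhammer parameters inside the inner sums are
\[
\Bigl(-q^{m^2}z^{m}q^{4mr},\,-q^{m^2}z^{-m}q^{-4mr};\,q^{2m^2}\Bigr)_n,
\]
which at $m=2$ yield $(-q^4z^2,-q^4/z^2;q^8)_n$ for $r=0$ and $(-q^{12}z^2,-q^{-4}/z^2;q^8)_n=(-z^2q^{12},-1/(z^2q^4);q^8)_n$ for $r=1$. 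These match exactly the two inner sums written in \eqref{geneq3a}, and the right hand side of \eqref{geneq3} at $m=2$ reduces to
\[
(-zq^2,-q^2/z,q^4;q^4)_{\infty}\,\frac{(-q^4;q^8)_{\infty}}{(q^8;q^8)_{\infty}},
\]
which is precisely the right hand side of \eqref{geneq3a}.

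The proof of \eqref{geneq3b} proceeds in the same way with $m=3$, so that $m^2=9$, $2m^2=18$, $4m^2=36$, and the outer sum runs over $r\in\{0,1,2\}$, giving outer weights $1,\;q^2z,\;q^{8}z^2$. Computing $-q^{9}z^{3}q^{12r}$ and $-q^{9}/(z^{3}q^{12r})$ for $r=0,1,2$ produces the three pairs $(-q^9z^3,-q^9/z^3)$, $(-z^3q^{21},-1/(z^3q^3))$, $(-z^3q^{33},-1/(z^3q^{15}))$ of Pochhammer parameters (base $q^{18}$) appearing in \eqref{geneq3b}, and the right hand side reduces to the stated product.

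The only real risk of error is arithmetic: keeping track of the exponents $m^{2}$, $4mr$, and $2r^{2}$ for each of $r=0,\dots,m-1$, and simplifying negative powers of $q$ inside the Pochhammer symbols. I would therefore organize the computation as a short table of exponents for each $r$ and then read off the resulting basic hypergeometric series term by term; no analytic or combinatorial obstacle arises beyond this.
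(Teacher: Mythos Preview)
Your proposal is correct and follows exactly the paper's own approach: the paper simply states that \eqref{geneq3a} and \eqref{geneq3b} are the cases $m=2$ and $m=3$ of Theorem~\ref{tgen3}. Your exponent bookkeeping is accurate and matches the parameters in the stated corollary.
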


\begin{proof}
These identities are, respectively, the cases $m=2$ and $m=3$ of
Theorem \ref{tgen3}.
\end{proof}

\begin{thm}\label{tgen4}
Let $m$ be a positive integer.  For $z, q \in \mathbb{C}$, $z, q
\not = 0$, and $|q|<1$,
\begin{multline}\label{geneq4}
\sum_{r=0}^{m-1}q^{r^2-r}z^r\\
\times\sum_{n=0}^{\infty}\frac{q^{m^2(n^2+n)/2}\left(-q^{m^2}z^m q^{m (2r-1)};q^{m^2}\right)_{n+1}\left(-q^{m^2},
\displaystyle{\frac{-1}{z^m q^{m (2r-1)}}}\,;q^{m^2} \right)_n}
{(q^{m^2};q^{m^2})_{2n+1}}\\
=(-z,-q^2/z,q^2\,;q^2)_{\infty}\frac{(-q^{m^2};q^{m^2})_{\infty}}{(q^{m^2};q^{m^2})_{\infty}}.
\end{multline}
\end{thm}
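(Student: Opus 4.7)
The plan is to imitate the proof of Theorem~\ref{tgen1}: combine the $m$-fold Jacobi Triple Product dissection provided by~\eqref{jtp2} with the previously-proved identity~\eqref{R1partnerSS}, which is exactly the $m=1$ case of~\eqref{geneq4} and serves as the base building block.

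Concretely, I would first replace $q$ by $q^2$ in~\eqref{jtp2} to obtain the dissection
\[
(-z,-q^2/z,q^2;q^2)_\infty
= \sum_{r=0}^{m-1} q^{r(r-1)}z^r\,\bigl(-q^{m^2}z^m q^{m(2r-1)},\,-q^{m^2}/(z^m q^{m(2r-1)}),\,q^{2m^2};q^{2m^2}\bigr)_\infty,
\]
and then multiply both sides by $(-q^{m^2};q^{m^2})_\infty/(q^{m^2};q^{m^2})_\infty = 1/\varphi(-q^{m^2})$. The left-hand side then becomes the right-hand side of~\eqref{geneq4}, while each summand on the right takes the form $q^{r(r-1)}z^r\cdot f(a,q^{2m^2}/a)/\varphi(-q^{m^2})$ with $a = q^{m^2}z^m q^{m(2r-1)}$.

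Next, for each $r$ I would apply~\eqref{R1partnerSS} with $x = q^{m^2}$ and this value of $a$. Under these substitutions one checks directly that $(-x;x)_n = (-q^{m^2};q^{m^2})_n$, $(-a;x)_{n+1} = (-q^{m^2}z^m q^{m(2r-1)};q^{m^2})_{n+1}$, $(-x/a;x)_n = (-1/(z^m q^{m(2r-1)});q^{m^2})_n$, and $x^{n(n+1)/2} = q^{m^2(n^2+n)/2}$, so the inner $n$-series furnished by~\eqref{R1partnerSS} matches, term by term, the inner sum appearing in~\eqref{geneq4}. Summing the resulting expressions over $r$ produces the claimed identity, and the case $m=1$ recovers~\eqref{R1partnerSS} itself as a sanity check.

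The only non-automatic step is the exponent bookkeeping: one must verify that $ab = q^{2m^2}$ for the chosen $a$, so that the modulus of the theta product emerging from~\eqref{R1partnerSS} agrees with the $q^{2m^2}$-modulus in the dissection of the first step, and one must keep track of the shift converting $(-x/a;x)_n$ into $(-1/(z^m q^{m(2r-1)});q^{m^2})_n$ (this is why $a$ carries an extra factor of $q^{m^2}$ beyond what one might naively expect). These verifications are entirely parallel to those already carried out in Theorems~\ref{tgen1}--\ref{tgen3}, and no genuinely new obstacle arises.
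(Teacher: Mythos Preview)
Your proposal is correct and follows exactly the same route as the paper: replace $q$ by $q^2$ in~\eqref{jtp2}, multiply by $(-q^{m^2};q^{m^2})_\infty/(q^{m^2};q^{m^2})_\infty$, and apply~\eqref{R1partnerSS} with $x=q^{m^2}$ and $a=q^{m^2}z^m q^{m(2r-1)}$ to each summand. Your bookkeeping of the exponents and $q$-shifted factorials is accurate, including the observation that $x/a$ simplifies to $1/(z^m q^{m(2r-1)})$.
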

\begin{proof}
Replace $q$ with $q^2$ in  \eqref{jtp2}, multiply both sides of the resulting identity by
$(-q^{m^2};q^{m^2})_{\infty}/(q^{m^2};q^{m^2})_{\infty}$, and then use \eqref{R1partnerSS} to replace each of the resulting infinite products with the corresponding series.
\end{proof}

\begin{cor}
For $z, q \not = 0$ and $|q|<1$, there holds
\begin{multline}\label{geneq4a}
\sum_{n=0}^{\infty}\frac{\displaystyle{ \left(-q^2 z^2;q^4\right)_{n+1}\left(-q^4,-q^2/z^2
;q^4\right)_n}
q^{2n^2+2n}}{(q^4;q^4)_{2n+1}}\\+z\sum_{n=0}^{\infty}\frac{\displaystyle{ \left(-q^6 z^2;q^4\right)_{n+1}\left(-q^4,-1/q^2z^2
;q^4\right)_n}
q^{2n^2+2n}}{(q^4;q^4)_{2n+1}}\\
=\frac{\left(-q^4\,;q^4\right)_{\infty}}
{\left(q^4\,;q^4\right)_{\infty}}\left(-z,-q^2/z,q^2\,;q^2\right)_{\infty}
\end{multline}
and
{\allowdisplaybreaks
\begin{multline}\label{geneq4b}
\sum_{n=0}^{\infty}\frac{\displaystyle{ \left(-q^6 z^2;q^9\right)_{n+1}\left(-q^9,-q^3/z^2
;q^9\right)_n}
q^{9(n^2+n)/2}}{(q^9;q^9)_{2n+1}}\\
+z\sum_{n=0}^{\infty}\frac{\displaystyle{ \left(-q^{12} z^2;q^9\right)_{n+1}\left(-q^9,-1/q^3z^2
;q^9\right)_n}
q^{9(n^2+n)/2}}{(q^9;q^9)_{2n+1}}\\
+q^2z^2\sum_{n=0}^{\infty}\frac{\displaystyle{ \left(-q^{18} z^2;q^9\right)_{n+1}\left(-q^9,-1/q^9z^2
;q^9\right)_n}
q^{9(n^2+n)/2}}{(q^9;q^9)_{2n+1}}\\
=\frac{\left(-q^9\,;q^9\right)_{\infty}}
{\left(q^9\,;q^9\right)_{\infty}}\left(-z,-q^2/z,q^2\,;q^2\right)_{\infty}.
\end{multline}}
\end{cor}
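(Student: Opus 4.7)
The plan is simply to specialize Theorem \ref{tgen4} at $m=2$ and $m=3$, verify that the right-hand side collapses to the stated product, and verify that the inner-sum parameters match those displayed in \eqref{geneq4a} and \eqref{geneq4b}. No new ideas are required; the two preceding corollaries (for Theorems \ref{tgen1}, \ref{tgen2}, \ref{tgen3}) are proved the same way and the work is entirely bookkeeping.

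For $m=2$ the outer sum runs over $r\in\{0,1\}$, so the prefactor $q^{r^2-r}z^r$ equals $1$ and $z$ respectively. Substituting $m^2=4$ and $m(2r-1)\in\{-2,2\}$ into Theorem \ref{tgen4}, the factor $(-q^{m^2}z^m q^{m(2r-1)};q^{m^2})_{n+1}$ specializes to $(-z^2q^2;q^4)_{n+1}$ when $r=0$ and to $(-z^2q^6;q^4)_{n+1}$ when $r=1$; simultaneously $(-1/(z^m q^{m(2r-1)});q^{m^2})_n$ becomes $(-q^2/z^2;q^4)_n$ and $(-1/(q^2z^2);q^4)_n$. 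The remaining ingredients $q^{m^2(n^2+n)/2}=q^{2n^2+2n}$, the Pochhammer $(-q^{m^2};q^{m^2})_n=(-q^4;q^4)_n$ and the denominator $(q^4;q^4)_{2n+1}$ are direct. Assembling these pieces produces exactly the left-hand side of \eqref{geneq4a}, while the right-hand side of Theorem \ref{tgen4} at $m=2$ is literally $(-z,-q^2/z,q^2;q^2)_{\infty}(-q^4;q^4)_\infty/(q^4;q^4)_\infty$, which is the right-hand side of \eqref{geneq4a}.

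For $m=3$ the outer sum has $r\in\{0,1,2\}$; the prefactors $q^{r^2-r}z^r$ are $1$, $z$, $q^2 z^2$, matching the three summands in \eqref{geneq4b}. With $m^2=9$ and $m(2r-1)\in\{-3,3,9\}$, the same substitutions as above yield the three pairs $(-z^2q^6;q^9)_{n+1}$, $(-z^2q^{12};q^9)_{n+1}$, $(-z^2q^{18};q^9)_{n+1}$ together with $(-q^3/z^2;q^9)_n$, $(-1/(q^3z^2);q^9)_n$, $(-1/(q^9z^2);q^9)_n$, while $q^{m^2(n^2+n)/2}=q^{9(n^2+n)/2}$ and the denominator $(q^9;q^9)_{2n+1}$ are immediate. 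These match the three inner sums of \eqref{geneq4b}; the right-hand side is $(-z,-q^2/z,q^2;q^2)_\infty(-q^9;q^9)_\infty/(q^9;q^9)_\infty$, again as required.

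The only genuine ``obstacle'' is the clerical one of keeping the many $q$-power exponents straight; after that check, the corollary follows.
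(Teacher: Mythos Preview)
Your proposal is correct and is exactly the paper's approach: the paper's proof is the single sentence ``These are, respectively, the cases $m=2$ and $m=3$ of Theorem~\ref{tgen4},'' and you have merely written out the bookkeeping explicitly. (One clerical slip: for $m=3$ the factor $z^m$ in Theorem~\ref{tgen4} is $z^3$, not $z^2$, so the $z^2$'s appearing in the Pochhammer symbols of \eqref{geneq4b}---and in your verification---are typographical errors for $z^3$; the argument itself is unaffected.)
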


\begin{proof}
These are, respectively, the cases $m=2$ and $m=3$ of Theorem \ref{tgen4}.
\end{proof}

\begin{thm}\label{tgen5}
Let $m$ be a positive integer.  For $z, q \in \mathbb{C}$, $z, q
\not = 0$, and $|q|<1$,
\begin{multline}\label{geneq5}
\sum_{r=0}^{m-1}q^{2r^2-2r}z^r(1+z^mq^{2m^2+(4r-2)m})\\
\times\sum_{n=0}^{\infty}\frac{q^{m^2(n^2+2n)}
\left(-q^{m^2};q^{2m^2}\right)_{n+1}
\left(-q^{3m^2+m (4r-2)}z^m,
\displaystyle{\frac{-1}{q^{m^2+m (4r-2)}z^m }}\,;q^{2m^2} \right)_n}
{(q^{2m^2};q^{2m^2})_{2n+1}}\\
=(-z,-q^4/z,q^4\,;q^4)_{\infty}\frac{(-q^{m^2};q^{2m^2})_{\infty}}{(q^{2m^2};q^{2m^2})_{\infty}}.
\end{multline}
\end{thm}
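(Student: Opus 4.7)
The strategy is to parallel the proofs of Theorems~\ref{tgen1}--\ref{tgen4}: use the Jacobi Triple Product in the form~\eqref{jtp2} to split the theta product $(-z,-q^4/z,q^4;q^4)_\infty$ into a sum of $m$ ``shifted'' theta products of base $q^{4m^2}$, and then invoke a summable two-variable series-product identity of Ramanujan type to replace each of those products by a hypergeometric series. Since~\eqref{RR22p} is the only one of our building blocks whose product side has $\psi(-x)$ in the denominator and whose theta base is $x^4$, it is the natural companion to use here.

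Concretely, I would first replace $q$ with $q^4$ in~\eqref{jtp2}, yielding
\[
(-z,-q^4/z,q^4;q^4)_\infty = \sum_{r=0}^{m-1} z^r q^{2r^2-2r}\bigl(-z^m q^{2m^2+(4r-2)m},\; -q^{2m^2-(4r-2)m}/z^m,\; q^{4m^2};q^{4m^2}\bigr)_\infty,
\]
and then multiply both sides by $(-q^{m^2};q^{2m^2})_\infty/(q^{2m^2};q^{2m^2})_\infty = 1/\psi(-q^{m^2})$, so that the left-hand side becomes the target product of~\eqref{geneq5}. Next I would apply~\eqref{RR22p} with $x=q^{m^2}$ and $a=z^m q^{2m^2+(4r-2)m}$ to each of the $m$ inner theta products. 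A brief check shows that $-ax$, $-x/a$, and $-x^4/a$ line up exactly with the three factors appearing in the inner theta product displayed above, and $1+a$ reproduces the prefactor $1+z^m q^{2m^2+(4r-2)m}$ in~\eqref{geneq5}.

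The one place that requires a bit of care is the bookkeeping for the $q$-Pochhammer denominators. After the substitution, the summand coming from~\eqref{RR22p} carries the denominator $(q^{m^2};q^{2m^2})_{n+1}(q^{4m^2};q^{4m^2})_n$, whereas~\eqref{geneq5} is written with denominator $(q^{2m^2};q^{2m^2})_{2n+1}$ and an extra numerator factor $(-q^{m^2};q^{2m^2})_{n+1}$. These two forms agree via the elementary identity
\[
(q^{m^2};q^{2m^2})_{n+1}\,(-q^{m^2};q^{2m^2})_{n+1}\,(q^{4m^2};q^{4m^2})_n = (q^{2m^2};q^{2m^2})_{2n+1},
\]
obtained by pairing the odd- and even-indexed factors of $(q^{2m^2};q^{2m^2})_{2n+1}$ and using $(q^{m^2};q^{2m^2})_{n+1}(-q^{m^2};q^{2m^2})_{n+1}=(q^{2m^2};q^{4m^2})_{n+1}$. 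I expect this routine Pochhammer manipulation to be the main (and essentially only) obstacle; once it is done the proof proceeds in exact parallel to those of Theorems~\ref{tgen1}--\ref{tgen4}, and the case $m=1$ recovers~\eqref{RR22p} itself, embedding that identity as the first member of an infinite family.
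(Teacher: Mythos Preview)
Your proposal is correct and follows exactly the same route as the paper's own proof: replace $q$ by $q^4$ in~\eqref{jtp2}, multiply by $(-q^{m^2};q^{2m^2})_\infty/(q^{2m^2};q^{2m^2})_\infty$, and apply~\eqref{RR22p} termwise with $x=q^{m^2}$, $a=z^m q^{2m^2+(4r-2)m}$. The Pochhammer reconciliation you spell out is indeed the only extra bookkeeping, and your identity $(q^{m^2};q^{2m^2})_{n+1}(-q^{m^2};q^{2m^2})_{n+1}(q^{4m^2};q^{4m^2})_n=(q^{2m^2};q^{2m^2})_{2n+1}$ is correct.
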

\begin{proof}
Replace $q$ with $q^4$ in  \eqref{jtp2}, multiply both sides of the resulting identity by
$(-q^{m^2};q^{2m^2})_{\infty}/(q^{2m^2};q^{2m^2})_{\infty}$, and then use \eqref{RR22p} to replace each of the resulting infinite products with the corresponding series.
\end{proof}

\begin{cor}
For $z, q \not = 0$ and $|q|<1$, there holds
\begin{multline}\label{geneq5a}
(1 + q^4z^2)\sum_{n=0}^{\infty}\frac{\displaystyle{(-q^4;q^8)_{n+1}
 \left(-z^2q^8,-1/z^2
;q^8\right)_n} q^{4n^2+8n}}{(q^8;q^8)_{2n+1}}\\
+z(1 + q^{12}z^2)\sum_{n=0}^{\infty}\frac{\displaystyle{(-q^4;q^8)_{n+1}
 \left(-z^2q^{16},-1/q^8z^2
;q^8\right)_n} q^{4n^2+8n}}{(q^8;q^8)_{2n+1}}
\\
=(-z ,-q^4/z,q^4\,;q^4)_{\infty}
\frac{(-q^{4};q^{8})_{\infty}}{(q^{8};q^{8})_{\infty}}
\end{multline}
and
\begin{multline}\label{geneq5b}
(1 + q^{12}z^3)\sum_{n=0}^{\infty}\frac{\displaystyle{(-q^9;q^{18})_{n+1}
 \left(-q^{21}z^3,-1/q^3z^3
;q^{18}\right)_n} q^{9n^2+18n}}{(q^{18};q^{18})_{2n+1}}\\
+z(1 + q^{24}z^3)\sum_{n=0}^{\infty}\frac{\displaystyle{(-q^9;q^{18})_{n+1}
 \left(-q^{33}z^3,-1/q^{15}z^3
;q^{18}\right)_n} q^{9n^2+18n}}{(q^{18};q^{18})_{2n+1}}\\
+q^4z^2(1 + q^{36}z^3)\sum_{n=0}^{\infty}\frac{\displaystyle{(-q^9;q^{18})_{n+1}
 \left(-q^{45}z^3,-1/q^{27}z^3
;q^{18}\right)_n} q^{9n^2+18n}}{(q^{18};q^{18})_{2n+1}}\\ =(-z,-q^4/z,q^4\,;q^4)_{\infty}
\frac{(-q^{9};q^{18})_{\infty}}{(q^{18};q^{18})_{\infty}}.
\end{multline}
\end{cor}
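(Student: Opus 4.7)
The plan is to obtain \eqref{geneq5a} and \eqref{geneq5b} simply by specializing Theorem \ref{tgen5} to $m=2$ and $m=3$ respectively and then simplifying each resulting summand. Since Theorem \ref{tgen5} has already been established (from \eqref{jtp2} and \eqref{RR22p}), no further $q$-series machinery is needed; the work is purely bookkeeping on the exponents $m^2$, $2m^2$, $3m^2+m(4r-2)$, $m^2+m(4r-2)$, and on the overall prefactor $q^{2r^2-2r}z^r(1+z^m q^{2m^2+(4r-2)m})$.

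For \eqref{geneq5a}, I would set $m=2$ in \eqref{geneq5}, so that the base $q^{m^2}=q^4$, the inner base $q^{2m^2}=q^8$, the exponent $q^{m^2(n^2+2n)}=q^{4n^2+8n}$, and the sum over $r$ has two terms, $r=0$ and $r=1$. For $r=0$ the prefactor is $q^0 z^0(1+z^2q^{8+(-2)\cdot2})=1+q^4z^2$, the parameters inside the $q^8$-factorials become $(-q^{12-4}z^2,-1/(q^{4-4}z^2);q^8)_n=(-z^2q^8,-1/z^2;q^8)_n$, which matches the first sum. For $r=1$, the prefactor is $q^0 z(1+z^2q^{8+2\cdot2})=z(1+q^{12}z^2)$, and the parameters become $(-q^{12+4}z^2,-1/(q^{4+4}z^2);q^8)_n=(-z^2q^{16},-1/q^8z^2;q^8)_n$, matching the second sum. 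The right-hand side product $(-q^{m^2};q^{2m^2})_\infty/(q^{2m^2};q^{2m^2})_\infty$ becomes $(-q^4;q^8)_\infty/(q^8;q^8)_\infty$, in agreement with the stated product.

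For \eqref{geneq5b}, I would carry out the analogous substitution $m=3$: the base becomes $q^9$, the inner base $q^{18}$, and the exponent $q^{m^2(n^2+2n)}=q^{9n^2+18n}$. The outer sum has three terms corresponding to $r=0,1,2$. Their prefactors $q^{2r^2-2r}z^r(1+z^3q^{18+(4r-2)\cdot3})$ evaluate to $1+q^{12}z^3$, $z(1+q^{24}z^3)$, and $q^4z^2(1+q^{36}z^3)$, which are exactly the three bracketed coefficients appearing in \eqref{geneq5b}. The pairs of $q^{18}$-Pochhammer parameters $(-q^{27+6r}z^3,-1/q^{9+(4r-2)\cdot3}z^3;q^{18})_n$ specialize at $r=0,1,2$ to $(-q^{21}z^3,-1/q^3z^3)$, $(-q^{33}z^3,-1/q^{15}z^3)$, $(-q^{45}z^3,-1/q^{27}z^3)$, again matching the corollary. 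The product side reduces to $(-z,-q^4/z,q^4;q^4)_\infty(-q^9;q^{18})_\infty/(q^{18};q^{18})_\infty$ as claimed.

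There is no real obstacle here; the only point requiring some care is keeping track of the signs and shifts in the exponents $2m^2+(4r-2)m$, $3m^2+m(4r-2)$, and $m^2+m(4r-2)$ when specializing to $m=3$, since mismatching a single factor of $q^3$ would throw off the coefficients. Once those are verified term by term, both \eqref{geneq5a} and \eqref{geneq5b} fall out directly as the $m=2$ and $m=3$ instances of \eqref{geneq5}, mirroring the proofs of the earlier corollaries of Theorems \ref{tgen1}--\ref{tgen4}.
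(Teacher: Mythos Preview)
Your approach is exactly the paper's: both identities are obtained as the cases $m=2$ and $m=3$ of Theorem~\ref{tgen5}, with the remaining work being straightforward bookkeeping of exponents. (One small slip: in the $m=3$ case your intermediate expression ``$27+6r$'' for the first Pochhammer exponent should be $3m^2+m(4r-2)=21+12r$; your stated specializations $21,33,45$ are nonetheless correct.)
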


\begin{proof}
These identities are, respectively, the cases $m=2$ and $m=3$ of
Theorem \ref{tgen5}.
\end{proof}

For the next results, we make us of the Quintuple Product Identity
(see \cite{C06} for a survey of the various proofs of this
identity).
\begin{equation}\label{qpi1}
\sum_{n=-\infty}^{\infty}q^{3n^2/2+n/2}(z^{3n}-z^{-3n-1}) =
(q,zq,1/z\,;q)_{\infty}(z^2q,q/z^2\,;q^{2})_{\infty},
\end{equation}
or, alternatively,
\begin{multline}\label{qpi2}
(-q^2z^3,-q/z^3,q^3;q^3)_{\infty}-z^{-1}(-q^2/z^3,-qz^3,q^3;q^3)_{\infty}\\
= (q,z q,1/z\,;q)_{\infty}(z^2q,q/z^2\,;q^{2})_{\infty}.
\end{multline}

\begin{thm}\label{t2gen}
If $z \not = 0$ and $|q|<1$, then
\begin{equation}\label{tqpi1}
\sum_{n=0}^{\infty}
\frac{1-(z+1/z)q^n}{1-(z+1/z)}\frac{(-z^3,-1/z^3\,;q)_n\,q^{n^2}}{(q\,;q)_{2n}}
=(z q,q/z\,;q)_{\infty}(q z^2,q/z^2\,;q^2)_{\infty}.
\end{equation}
\end{thm}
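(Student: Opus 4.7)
The plan is to combine Ramanujan's identity \eqref{R1} (specialised to $x=\sqrt q$ with two different choices of $a$) with the Quintuple Product Identity in the form \eqref{qpi2}. Taking $a = z^3\sqrt q$ in \eqref{R1} gives
\[ \sum_{n=0}^\infty \frac{q^{n^2}(-qz^3,-1/z^3;q)_n}{(q;q)_{2n}} = \frac{(-q^2z^3,-q/z^3,q^3;q^3)_\infty}{(q;q)_\infty}, \]
while $a = \sqrt q/z^3$ gives
\[ \sum_{n=0}^\infty \frac{q^{n^2}(-q/z^3,-z^3;q)_n}{(q;q)_{2n}} = \frac{(-q^2/z^3,-qz^3,q^3;q^3)_\infty}{(q;q)_\infty}. \]
The two right-hand side products are precisely those appearing on the left of \eqref{qpi2}.

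Forming the combination (first series) $- z^{-1} \cdot$ (second series) and invoking \eqref{qpi2} therefore identifies the result with $(q,zq,1/z;q)_\infty(z^2q,q/z^2;q^2)_\infty/(q;q)_\infty$. Extracting the factor $(1-z^{-1})$ from $(1/z;q)_\infty$ and then rescaling by $z/(z-1)$ converts this product side into exactly $(zq,q/z;q)_\infty(qz^2,q/z^2;q^2)_\infty$, the product on the right of \eqref{tqpi1}.

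It then remains to check that, after the $z/(z-1)$ rescaling, the combined series matches the left of \eqref{tqpi1}. Writing $(-qz^3;q)_n = (1+z^3q^n)(-z^3;q)_n/(1+z^3)$ and the analogous relation for $(-q/z^3;q)_n$, the two series merge into one whose summand is $q^{n^2}(-z^3,-1/z^3;q)_n/(q;q)_{2n}$ multiplied by the rational factor
\[ \frac{z}{z-1}\left[\frac{1+z^3q^n}{1+z^3} - \frac{z^{-1}(1+q^n/z^3)}{1+1/z^3}\right]. \]
Using $1-z^2=(1-z)(1+z)$, $1+z^3=(1+z)(1-z+z^2)$, $z^4-1=-(1-z^2)(1+z^2)$, and the identity $1-(z+1/z) = -(1-z+z^2)/z$, this rational factor collapses to $(1-(z+1/z)q^n)/(1-(z+1/z))$, which is the desired weight.

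The only obstacle I anticipate is bookkeeping in this final algebraic simplification; conceptually the argument is entirely parallel to the proofs of Theorems~\ref{tgen1}--\ref{tgen5}, except that the Quintuple Product Identity now plays the role taken there by the Jacobi Triple Product.
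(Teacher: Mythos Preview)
Your proof is correct and follows essentially the same route as the paper: specialise \eqref{R1} at $x=q^{1/2}$ with $a=q^{1/2}z^3$ and $a=q^{1/2}/z^3$, subtract with weight $z^{-1}$, and apply the Quintuple Product Identity \eqref{qpi2}. The paper merely says ``the result follows \ldots\ after slightly rearranging,'' whereas you have supplied the algebraic details of that rearrangement; the verification of the rational factor is sound.
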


\begin{proof}
By setting $x=q^{1/2}$ in \eqref{R1} and then replacing $a$ with, respectively, $
q^{1/2}z^3$ and $ q^{1/2}/z^3$, we get that
\begin{align*}
\sum_{n=0}^{\infty}\frac{ q^{n^2}(-z^3
q,-1/z^3;q)_n}{(q;q)_{2n}}&=\frac{(-q^2z^3,
-q/z^3,q^3\,;q^3)_{\infty}}{(q;q)_{\infty}}\\
z^{-1}\sum_{n=0}^{\infty}\frac{ q^{n^2}(-q/z^3
,-z^3;q)_n}{(q;q)_{2n}}&=z^{-1}\frac{(-q^2/z^3, -q
z^3,q^3\,;q^3)_{\infty}}{(q;q)_{\infty}}.
\end{align*}
The result follows from \eqref{qpi2}, after subtracting these two
identities and slightly rearranging the resulting identity.
\end{proof}

\begin{thm}\label{t3gen}
If $z \not = 0$ and $|q|<1$, then
\begin{equation}\label{tqpi2}
\sum_{n=0}^{\infty}
\frac{\left(1-\left(q z+\frac{1}{z}\right)q^n\right)
\left(-q^2z^3,\frac{-1}{qz^3}\,;q\right)_n\,q^{n^2+n}}{(q\,;q)_{2n+1}}
=(z q,1/z\,;q)_{\infty}(q^3 z^2,q/z^2\,;q^2)_{\infty}.
\end{equation}
\end{thm}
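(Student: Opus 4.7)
The plan parallels the proof of Theorem~\ref{t2gen}, but uses the partner identity \eqref{R1partner} in place of \eqref{R1}. Indeed the target series has denominator $(q;q)_{2n+1}$ and exponent $q^{n^2+n}$, which match the left-hand side of \eqref{R1partner} after the substitution $x^2=q$. I would begin by setting $x=q^{1/2}$ in \eqref{R1partner} and specializing $a$ to $q^2z^3$ and to $1/(qz^3)$ in turn. These two choices are essentially forced: after applying the shift $(-A;q)_{n+1}=(1+Aq^n)(-A;q)_n$, their factorials $(-a;q)_{n+1}(-q/a;q)_n$ share the common factor $(-q^2z^3,-1/(qz^3);q)_n$ that appears in the target, and the corresponding theta products $f(a,q^3/a)$ are (up to a rescaling by the monomial $qz^3$) the two products $(-q^2z^3,-q/z^3,q^3;q^3)_\infty$ and $(-q^2/z^3,-qz^3,q^3;q^3)_\infty$ appearing in the form \eqref{qpi2} of the Quintuple Product Identity. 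The rescaling needed for the $a=1/(qz^3)$ case is handled by the relations $(-1/(qz^3);q^3)_\infty=(1+1/(qz^3))(-q^2/z^3;q^3)_\infty$ and $(-qz^3;q^3)_\infty=(1+qz^3)(-q^4z^3;q^3)_\infty$, whose ratio is exactly $qz^3$.

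Next I would form the combination [first equation] $-\,qz^2\cdot$[second equation]. On the product side, after the rescaling, this becomes precisely the left side of \eqref{qpi2} divided by $(q;q)_\infty$, so it collapses to $(zq,1/z;q)_\infty(z^2q,q/z^2;q^2)_\infty$; writing $(z^2q;q^2)_\infty=(1-qz^2)(q^3z^2;q^2)_\infty$ then extracts a clean factor of $(1-qz^2)$.

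On the series side, pulling out the common factorial $(-q^2z^3,-1/(qz^3);q)_n$ leaves the scalar bracket $(1+q^{n+2}z^3)-qz^2(1+q^{n-1}/z^3)$. The crucial algebraic step is the observation $q^2z^3-1/z=-(1-qz^2)(1+qz^2)/z$, which allows this bracket to factor as $(1-qz^2)[1-(qz+1/z)q^n]$. The common factor $(1-qz^2)$ then cancels from both sides, yielding the theorem. The main obstacle is discovering the right combination coefficient $qz^2$ (rather than the simpler $z^{-1}$ used in Theorem~\ref{t2gen}) and recognizing in advance that $(1-qz^2)$ must appear identically on both sides so as to cancel cleanly; once those scalings are identified the remaining manipulations are routine.
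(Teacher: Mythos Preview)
Your proposal is correct and follows essentially the same route as the paper: set $x=q^{1/2}$ in \eqref{R1partner}, make two specializations of $a$, subtract, and apply the Quintuple Product Identity \eqref{qpi2}. The only cosmetic difference is that the paper takes the second specialization to be $a=qz^3$ rather than your $a=1/(qz^3)$; since $(-qz^3;q)_{n+1}(-1/z^3;q)_n = qz^3\,(1+q^{n-1}/z^3)(-q^2z^3,-1/(qz^3);q)_n$, the two choices differ only by the scalar $qz^3$, so the paper's combination coefficient is simply $z^{-1}$ (as in Theorem~\ref{t2gen}) rather than your $qz^2$, and the subsequent factorization $(1-qz^2)\bigl[1-(qz+1/z)q^n\bigr]$ and cancellation against $(z^2q;q^2)_\infty=(1-qz^2)(q^3z^2;q^2)_\infty$ are identical.
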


\begin{proof}
The proof is similar to that of the previous theorem, this time
 setting $x=q^{1/2}$ in \eqref{R1partner}  and then replacing $a$ with, respectively, $
q^{2}z^3$ and $ q z^3$. The details are omitted.
\end{proof}

\section{Partition Identities}\label{parsec}
The analytic identities under consideration in this paper also imply some general partition identities.

\begin{thm}
Let $k\geq 3$ and $r<k/2$ be positive integers. Let $A(n)$ count the
number of partitions of $n$ with
\begin{itemize}
 \item distinct parts $\equiv \pm r \pmod k$,
 \item possibly repeating parts $\equiv 0 \pmod k$,
 \item all odd multiples of $k$ from $k$ to the largest occurring odd
multiple of $k$ occur at least once,
 \item the largest occurring even multiple of $k$ (if any) is at most $k$ more
than the largest occurring odd multiple of $k$,
 \item the largest occurring part $\equiv r \pmod k$ is smaller than
half the largest odd multiple of $k$,
 \item the largest occurring part $\equiv -r \pmod k$ is smaller than $k/2$ plus half
the largest odd multiple of $k$.
  \end{itemize}
Let $B(n)$ count the number of partitions of $n$ with
  \begin{itemize}
 \item distinct parts $\equiv \pm (k+r) \pmod{3k}$,
 \item possibly repeating parts $\equiv \pm k \pmod{3k}$.
  \end{itemize}
Then
\[
A(n) = B(n)
\]
for all integers $n\geq 1$.
\end{thm}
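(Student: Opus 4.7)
The plan is to recognize the desired identity as the coefficient-by-coefficient statement hiding inside a single suitable specialization of Ramanujan's identity \eqref{R1}, namely $x = q^{k/2}$ and $a = q^{r-k/2}$. Although these formal substitutions involve half-integer exponents in $x$ and $a$, every factor appearing on both sides of \eqref{R1} turns out to involve only integer powers of $q$, so no genuine fractional-exponent issue arises.

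First I would verify that the right-hand side of \eqref{R1} under this specialization is exactly the generating function for $B(n)$. With the chosen $x$ and $a$,
\[
\frac{f(ax^3,x^3/a)}{f(-x^2)} = \frac{f(q^{k+r},q^{2k-r})}{f(-q^k)},
\]
and expanding using the product form \eqref{RamThetaDef} of $f(a,b)$ together with the splitting of $(q^k;q^k)_\infty$ into its three residue classes modulo $3k$ yields
\[
\frac{(-q^{k+r};q^{3k})_\infty\,(-q^{2k-r};q^{3k})_\infty}{(q^k;q^{3k})_\infty\,(q^{2k};q^{3k})_\infty},
\]
which is visibly $\sum_{n\geq 0} B(n)\,q^n$ read off from the two bullet points defining $B$.

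Next I would give the combinatorial reading of the $n$th summand of the left-hand side of \eqref{R1} under the same specialization,
\[
\frac{q^{kn^2}\,(-q^r;q^k)_n\,(-q^{k-r};q^k)_n}{(q^k;q^k)_{2n}}.
\]
Writing $q^{kn^2} = q^{k(1+3+\cdots+(2n-1))}$, the combined factor $q^{kn^2}/(q^k;q^k)_{2n}$ is the generating function for partitions into parts from $\{k,2k,\dots,2nk\}$ in which each of the odd multiples $k,3k,\dots,(2n-1)k$ appears with multiplicity at least one. This enforces the second, third, and fourth bullet points of the definition of $A$ in the case that the largest odd multiple of $k$ occurring equals $(2n-1)k$. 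The factors $(-q^r;q^k)_n$ and $(-q^{k-r};q^k)_n$ generate, respectively, distinct parts from $\{r,k+r,\dots,(n-1)k+r\}$ and from $\{k-r,2k-r,\dots,nk-r\}$. The hypothesis $r < k/2$ is precisely what makes $(n-1)k + r < \tfrac{1}{2}(2n-1)k$ and $nk - r < \tfrac{k}{2} + \tfrac{1}{2}(2n-1)k$, so these allowed sets agree exactly with the last two bullet points of the definition of $A(n)$; the $n=0$ summand, equal to $1$, accounts for the empty partition.

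Summing over $n \geq 0$ then gives $\sum_{n\geq 0} A(n)\,q^n = \sum_{n\geq 0} B(n)\,q^n$, whence $A(n) = B(n)$ for all $n\geq 1$ on equating coefficients. The principal point of care is verifying that the map sending a nonempty partition counted by $A$ to the integer $n$ determined by its largest odd multiple of $k$ (namely, that multiple equals $(2n-1)k$) is a bijection onto the disjoint union of the partition families whose generating functions are the individual summands. All six bullet points in the definition of $A$ are precisely what encode this, and the strict inequality $r < k/2$ is the one nontrivial hypothesis that makes the correspondence sharp.
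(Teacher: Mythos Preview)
Your argument is correct and follows exactly the same route as the paper: specialize \eqref{R1} via $x=q^{k/2}$, $a=q^{r-k/2}$ to obtain \eqref{R1rk1}, then read off the generating functions for $A(n)$ and $B(n)$ using $kn^2=k+3k+\cdots+(2n-1)k$. Your writeup actually supplies more combinatorial detail than the paper's own proof, including the explicit verification that $r<k/2$ makes the size bounds on the $\pm r$ parts match the ranges of the finite products.
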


\begin{proof}
Set $x=q^{k/2}$ and $a=q^{r-k/2}$ in \eqref{R1} to get (after some simple
manipulation on the product side) the identity
\begin{equation}\label{R1rk1}
\sum_{n=0}^{\infty}\frac{q^{k
n^2}(-q^{r},-q^{k-r};q^{k})_n}{(q^k;q^k)_{2n}}=\frac{(-q^{k+r},-q^{2k-r};q^{3k})_{\infty}}
{(q^{k},q^{2k};q^{3k})_{\infty}}.
\end{equation}
If we write the left side as $\sum_{n=0}^{\infty}A(n)q^n$ and the
write side as $\sum_{n=0}^{\infty}B(n)q^n$, noting that
\[ k+3k+\dots +(2n-1)k = k n^2,
\]we get the result.
\end{proof}

\begin{thm}
Let $k\geq 3$ and $r<k/2$ be positive integers. Let $C(n)$ count the
number of partitions of $n$ with
\begin{itemize}
 \item distinct parts $\equiv k \pm r \pmod{2k}$,
 \item possibly repeating parts $\equiv 0, \pm k (\mod 4 k)$,
 \item all odd multiples of $k$ from $k$ to the largest occurring odd
multiple of $k$ occur at least once,
 \item the largest occurring  multiple of $4k$ (if any) is at most $2k$ more
than twice the largest occurring odd multiple of $k$,
 \item the largest occurring part $\equiv k+r \pmod{2k}$ is smaller than
$k$ plus the largest occurring odd multiple of $k$,
 \item the largest occurring part $\equiv k-r \pmod{2k}$ is smaller than
the largest occurring odd multiple of $k$.
  \end{itemize}
Let $D(n)$ count the number of partitions of $n$ with
  \begin{itemize}
 \item distinct parts $\equiv 2k \pm r \pmod{4k}$,
 \item possibly repeating parts $\equiv  k \pmod{2k}$.
  \end{itemize}
Then
\[
C(n) = D(n)
\]
for all integers $n\geq 1$.
\end{thm}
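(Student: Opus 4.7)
The plan is to mimic the preceding partition theorem, substituting identity~\eqref{R2} for \eqref{R1}. First I would set $x = q^k$ and $a = q^r$ in \eqref{R2}. Invoking the Jacobi triple product form $f(u,v) = (-u,-v,uv;uv)_\infty$ on the right and simplifying yields
\[
\sum_{n=0}^\infty \frac{q^{kn^2}(-q^{k+r}, -q^{k-r}; q^{2k})_n}{(q^k; q^{2k})_n (q^{4k}; q^{4k})_n}
= \frac{(-q^{2k+r}, -q^{2k-r}; q^{4k})_\infty}{(q^k; q^{2k})_\infty}.
\]

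Second, I would identify the right-hand side as $\sum_n D(n) q^n$: the two numerator Pochhammers generate partitions into distinct parts $\equiv 2k\pm r \pmod{4k}$, while $1/(q^k; q^{2k})_\infty$ generates possibly repeating parts $\equiv k \pmod{2k}$, which is the same as $\equiv \pm k \pmod{4k}$, exactly the two ingredients defining $D(n)$.

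Third, I would read off the left-hand side as $\sum_n C(n) q^n$ term by term, where the summation index $n$ simultaneously encodes the largest odd multiple of $k$ that appears, namely $(2n-1)k$. The prefactor $q^{kn^2} = q^{k + 3k + \cdots + (2n-1)k}$ forces each odd multiple of $k$ from $k$ through $(2n-1)k$ to occur; division by $(q^k; q^{2k})_n$ allows these odd multiples to repeat; $(-q^{k+r}; q^{2k})_n$ contributes distinct parts from $\{k+r, 3k+r, \ldots, (2n-1)k+r\}$; $(-q^{k-r}; q^{2k})_n$ similarly handles residue $k-r$ with largest part $(2n-1)k - r$; and $1/(q^{4k}; q^{4k})_n$ permits repeating multiples of $4k$ up to $4kn$, which equals $2k + 2(2n-1)k$, matching the \emph{``at most $2k$ more than twice the largest odd multiple''} bullet.

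The main obstacle is the bookkeeping in this last step: one must verify that each of the six bullets defining $C(n)$ corresponds precisely to the truncation built into the $n$th summand, and that the hypothesis $r < k/2$ guarantees the residue classes $k \pm r \pmod{2k}$ and $2k \pm r \pmod{4k}$ remain genuinely distinct, so the combinatorial interpretation is unambiguous and no ``phantom'' parts intrude beyond $(2n-1)k$.
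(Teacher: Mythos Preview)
Your proposal is correct and follows exactly the approach of the paper: substitute $x=q^k$, $a=q^r$ into~\eqref{R2} to obtain the displayed identity, then read off the generating functions for $D(n)$ and $C(n)$ from the product and series sides respectively. The paper's own proof is terser, omitting the bullet-by-bullet verification you supply, but the substance is identical.
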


\begin{proof}
Similarly, if we set $x=q^{k}$ and $a=q^{r}$ in \eqref{R2}, where
once again $k\geq 3$ and $r<k/2$ are positive integers, we get
 the identity
\begin{equation}\label{R2rk1}
\sum_{n=0}^{\infty}\frac{q^{k
n^2}(-q^{k+r},-q^{k-r};q^{2k})_n}{(q^k;q^{2k})_{n}(q^{4k};q^{4k})_{n}}
=\frac{(-q^{2k+r},-q^{2k-r};q^{4k})_{\infty}}
{(q^{k};q^{2k})_{\infty}}.
\end{equation}
The result now follows.
\end{proof}

\begin{thm}
Let $k\geq 3$ and $r<k/2$ be positive integers. Let $E(n)$ count the
number of partitions of $n$ with
\begin{itemize}
 \item distinct parts $\equiv \pm r \pmod k$,
 \item possibly repeating parts $\equiv 0 \pmod k$,
 \item all even multiples of $k$ from $2k$ to the largest occurring
 even
multiple of $k$ occur at least once,
 \item the largest occurring odd multiple of $k$ (if any) is at most $k$ more
than the largest occurring even multiple of $k$,
 \item the largest occurring part $\equiv r \pmod k$ is smaller
 than $k/2$ plus
half the largest even multiple of $k$,
 \item the largest occurring part $\equiv -r \pmod k$ is smaller than  half
the largest even multiple of $k$.
  \end{itemize}
Let $F(n)$ count the number of partitions of $n$ with
  \begin{itemize}
 \item distinct parts $\equiv \pm r \pmod{3k}$,
 \item possibly repeating parts $\equiv \pm k \pmod{3k}$.
  \end{itemize}
Then
\[
E(n) = F(n)
\]
for all integers $n\geq 1$.
\end{thm}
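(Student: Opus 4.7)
The plan is to proceed in exact parallel with the previous two partition theorems of this section, this time invoking identity \eqref{R1partner} in place of \eqref{R1} or \eqref{R2}. Concretely, I would substitute $x = q^{k/2}$ and $a = q^r$ in \eqref{R1partner}. Under this substitution the product side becomes
\[
\frac{f(q^r, q^{3k-r})}{f(-q^k)} = \frac{(-q^r, -q^{3k-r}, q^{3k}; q^{3k})_\infty}{(q^k; q^k)_\infty},
\]
and writing $(q^k; q^k)_\infty = (q^k, q^{2k}, q^{3k}; q^{3k})_\infty$ and cancelling the common $(q^{3k};q^{3k})_\infty$ factor yields
\[
\frac{(-q^r, -q^{3k-r}; q^{3k})_\infty}{(q^k, q^{2k}; q^{3k})_\infty}.
\]
Read as a generating function, this is precisely $\sum_n F(n) q^n$: the numerator contributes distinct parts in the classes $\pm r \pmod{3k}$, while the denominator contributes unrestricted parts in the classes $\pm k \pmod{3k}$.

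The main step, and the only place that requires any care, is the combinatorial interpretation of the series side
\[
\sum_{n=0}^\infty \frac{q^{kn(n+1)} \, (-q^r; q^k)_{n+1} \, (-q^{k-r}; q^k)_n}{(q^k; q^k)_{2n+1}}
\]
as $\sum_n E(n) q^n$. I would index the sum so that $n$ records the case where $2nk$ is the largest even multiple of $k$ appearing in the partition (with $n = 0$ corresponding to no even multiple of $k$). The prefactor $q^{kn(n+1)} = q^{2k + 4k + \cdots + 2nk}$ installs the mandatory single copy of each of $2k, 4k, \ldots, 2nk$, while $1/(q^k; q^k)_{2n+1}$ adjoins any additional copies of multiples of $k$ drawn from $\{k, 2k, \ldots, (2n+1)k\}$. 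This simultaneously allows arbitrary repetition of parts $\equiv 0 \pmod k$ and forces the largest occurring odd multiple of $k$ to lie in $\{k, 3k, \ldots, (2n+1)k\}$, i.e. to be at most $k$ more than $2nk$.

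It then remains to read off the remaining two conditions from the factors $(-q^r; q^k)_{n+1}$ and $(-q^{k-r}; q^k)_n$. The first encodes a choice of distinct parts from $\{r, k+r, \ldots, nk+r\}$; since $r < k/2$, the threshold $k/2 + \tfrac{1}{2}(2nk) = nk + k/2$ excludes $(n+1)k+r$ but admits $nk+r$, matching the stated bound. Similarly, $(-q^{k-r}; q^k)_n$ selects distinct parts from $\{k-r, 2k-r, \ldots, nk-r\}$, each strictly less than $nk$, which is half the largest even multiple of $k$. Summing over $n$ thus yields $\sum_n E(n) q^n$, and equating with the product side gives $E(n) = F(n)$. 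The only obstacle is verifying these boundary inequalities carefully, and that verification rests squarely on the hypothesis $r < k/2$.
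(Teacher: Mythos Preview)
Your proposal is correct and follows exactly the same route as the paper: substitute $x=q^{k/2}$, $a=q^r$ into \eqref{R1partner}, simplify the product side to $\dfrac{(-q^{r},-q^{3k-r};q^{3k})_{\infty}}{(q^{k},q^{2k};q^{3k})_{\infty}}$, and use $2k+4k+\cdots+2nk=k(n^2+n)$ to read the $n$-th summand combinatorially. Your write-up is in fact more explicit than the paper's own proof, which records only the resulting identity and the factorization of $kn(n+1)$ and leaves the partition interpretation to the reader.
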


\begin{proof}
This time set $x=q^{k/2}$ and $a=q^{r}$ in
\eqref{R1partner}, where $k\geq 3$ and $r<k/2$ are positive
integers, to get  the identity
\begin{equation}\label{R3rk1}
\sum_{n=0}^{\infty}\frac{q^{k
(n^2+n)}(-q^{r},q^{k})_{n+1}(-q^{k-r};q^{k})_n}{(q^k;q^k)_{2n+1}}
=\frac{(-q^{r},-q^{3k-r};q^{3k})_{\infty}}
{(q^{k},q^{2k};q^{3k})_{\infty}}.
\end{equation}
The result once again follows, after
noting that
\[ 2k+4k+\dots +2nk = k (n^2+n).
\]
\end{proof}

\begin{thm}
Let $k\geq 3$ and $r<k/2$ be positive integers. Let $G(n)$ count the
number of partitions of $n$ with
\begin{itemize}
 \item distinct parts $\equiv k \pm r \pmod{2k}$,
 \item possibly repeating parts $\equiv 0, \pm k (\mod 4 k)$,
 \item all odd multiples of $k$ from $k$ to the largest occurring odd
multiple of $k$ occur at least once,
 \item the largest occurring  multiple of $4k$ (if any) is smaller than twice the largest occurring odd multiple of $k$,
 \item all parts $\equiv k + r \pmod{2k}$ are smaller than
 the largest occurring odd multiple of $k$,
\item all parts $\equiv k - r \pmod{2k}$ are smaller than $-2k$ plus
 the largest occurring odd multiple of $k$ .
  \end{itemize}
Let $H(n)$ count the number of partitions of $n$ with
  \begin{itemize}
 \item distinct parts $\equiv  \pm r \pmod{4k}$, with the part $r$ not occurring,
 \item possibly repeating parts $\equiv  k \pmod{2k}$, with the part $k$ occurring at least once.
  \end{itemize}
Then
\[
G(n) = H(n)
\]
for all integers $n\geq 1$.
\end{thm}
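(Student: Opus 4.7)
The plan is to apply \eqref{RR22p} with $x = q^k$ and $a = q^r$ and then manipulate both sides into a form suitable for partition-theoretic interpretation. Using $\psi(-q^k) = (q^k, q^{3k}, q^{4k}; q^{4k})_\infty = (q^k;q^{2k})_\infty (q^{4k};q^{4k})_\infty$, cancelling the $(q^{4k};q^{4k})_\infty$ against the corresponding factor inside $f(q^r, q^{4k-r})$, dividing by $1 + q^r$ (using $(-q^r;q^{4k})_\infty / (1 + q^r) = (-q^{4k+r};q^{4k})_\infty$), and finally multiplying through by $q^k$ so that $q^k \cdot q^{k(n^2+2n)} = q^{k(n+1)^2}$, I expect to arrive at the key analytic identity
\begin{equation*}
\sum_{n=0}^{\infty} \frac{(-q^{k+r}, -q^{k-r}; q^{2k})_n \, q^{k(n+1)^2}}{(q^k; q^{2k})_{n+1} (q^{4k}; q^{4k})_n} = \frac{q^k (-q^{4k+r}, -q^{4k-r}; q^{4k})_\infty}{(q^k; q^{2k})_\infty}.
\end{equation*}

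Next, I would interpret the right-hand side as the generating function $\sum_n H(n) q^n$. The product $(-q^{4k+r}, -q^{4k-r}; q^{4k})_\infty$ enumerates distinct parts $\equiv \pm r \pmod{4k}$ with the smallest positive representative $r$ excluded, while $q^k / (q^k;q^{2k})_\infty$ enumerates partitions into odd multiples of $k$ in which the part $k$ is compulsory, exactly matching the two defining conditions on $H(n)$.

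For the left-hand side, the goal is to show that the $n$-th summand is the generating function for those partitions counted by $G(n)$ whose largest odd multiple of $k$ equals $(2n+1)k$. The combinatorial identity $k(n+1)^2 = k + 3k + 5k + \cdots + (2n+1)k$ (analogous to the step $2k + 4k + \cdots + 2nk = k(n^2 + n)$ used in the proof of \eqref{R3rk1}), combined with $1/(q^k;q^{2k})_{n+1}$, forces each of the odd multiples $k, 3k, \ldots, (2n+1)k$ to occur at least once and caps the odd multiples at $(2n+1)k$. The factor $1/(q^{4k};q^{4k})_n$ accounts for multiples of $4k$ up to $4nk$, and $(-q^{k+r}, -q^{k-r}; q^{2k})_n$ supplies distinct parts $\equiv k + r$ and $\equiv k - r \pmod{2k}$ of sizes at most $(2n-1)k + r$ and $(2n-1)k - r$, respectively.

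The main obstacle will be the careful, condition-by-condition verification that these four factors produce exactly the six structural constraints defining $G(n)$. This reduces to three strict inequalities: $4nk < 2(2n+1)k$ (multiples of $4k$ strictly less than twice the largest odd multiple), $(2n-1)k + r < (2n+1)k$ (parts $\equiv k + r$ strictly less than the largest odd multiple), and $(2n-1)k - r < (2n+1)k - 2k$ (parts $\equiv k - r$ strictly less than $-2k$ plus the largest odd multiple). All three follow from the hypothesis $0 < r < k/2$, and summing over $n \geq 0$ on both sides of the displayed identity then yields $G(n) = H(n)$ for every $n \geq 1$.
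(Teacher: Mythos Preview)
Your proposal is correct and follows essentially the same approach as the paper: substitute $x=q^k$, $a=q^r$ into \eqref{RR22p}, cancel the factor $1+q^r$, multiply by $q^k$, and invoke $k(n+1)^2 = k+3k+\cdots+(2n+1)k$ to obtain the key identity \eqref{RR22rk2}. Your write-up is in fact more detailed than the paper's, which simply states that ``the result now follows'' after displaying \eqref{RR22rk2}; your explicit verification that each factor in the $n$-th summand matches one of the six structural conditions on $G(n)$ (and that the three bounding inequalities are sharp in the relevant residue classes) fills in exactly the routine checks the paper leaves to the reader.
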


\begin{proof}
This time cancel the $1+a$ factor on both sides of \eqref{RR22p}, set $x=q^{k}$ and $a=q^{r}$, where
once again $k\geq 3$ and $r<k/2$ are positive integers. Multiply both sides of
 the resulting  identity by $q^k$ to get
\begin{equation}\label{RR22rk2}
\sum_{n=0}^{\infty}\frac{q^{k
(n+1)^2}(-q^{k+r},-q^{k-r};q^{2k})_n}{(q^k;q^{2k})_{n+1}(q^{4k};q^{4k})_{n}}
=\frac{q^k(-q^{4k+r},-q^{4k-r};q^{4k})_{\infty}}
{(q^{k};q^{2k})_{\infty}}.
\end{equation}
The result now follows, upon noting that
\[
k +3k + 5k + \dots + (2n+1)k=k(n+1)^2.
\]
\end{proof}

% ------------------------------------------------------------------------

%\subsection*{Acknowledgment}
%Many thanks to our \TeX-pert for developing this class file.
% ------------------------------------------------------------------------
\end{document}